\documentclass[oneside,english,a4wide]{amsart}
\usepackage[latin9]{inputenc}
\usepackage[a4paper]{geometry}
\usepackage{mathabx}
\usepackage{esvect}
\usepackage{babel}
\usepackage{mathtools}
\usepackage{amstext}
\usepackage{amsthm}
\usepackage{amssymb}
\usepackage{enumerate}
\usepackage{esint}
\usepackage{graphicx}
\usepackage{bbm}
\usepackage[all]{xy}
\usepackage{mathrsfs}
\usepackage[unicode=true,
bookmarks=true,bookmarksnumbered=true,bookmarksopen=true,bookmarksopenlevel=2,
breaklinks=false,pdfborder={0 0 1},backref=false,colorlinks=false]
{hyperref}
\hypersetup{
	colorlinks,linkcolor=blue,anchorcolor=blue,citecolor=blue}
\usepackage{breakurl}
\usepackage{autobreak}

\usepackage[svgnames]{xcolor} 
\usepackage{graphicx} 

\makeatletter
\numberwithin{equation}{section}
\numberwithin{figure}{section}
\theoremstyle{plain}
\newtheorem{thm}{\protect\theoremname}[section]
\theoremstyle{plain}

\newtheorem{proposition}[thm]{Proposition}
\newtheorem{corollary}[thm]{Corollary}
\theoremstyle{definition}

\theoremstyle{plain}

\newtheorem{lemma}[thm]{Lemma}
\theoremstyle{remark}

\theoremstyle{definition}

\newtheorem*{claim*}{Claim}

\newtheorem{conjecture}[thm]{Conjecture}

\theoremstyle{remark}

\theoremstyle{definition}
\newtheorem*{defn*}{\protect\definitionname}

\usepackage{babel}
\usepackage{amscd}

\makeatother

\providecommand{\definitionname}{Definition}
\providecommand{\lemmaname}{Lemma}
\providecommand{\propositionname}{Proposition}
\providecommand{\remarkname}{Remark}
\providecommand{\theoremname}{Theorem}

\newcommand{\Rmnum}[1]{\expandafter\@slowromancap\romannumeral #1@}
\makeatother

\newcommand{\nat}{{\mathbb N}}

\newcommand{\com}{{\mathbb C}}

\newcommand{\e}{\varepsilon}

\newcommand{\Tr}{\mbox{\rm Tr}}

\newcommand{\8}{\infty}

\newcommand{\wt}{\widetilde}

\newcommand{\be}{\begin{eqnarray*}}
	\newcommand{\ee}{\end{eqnarray*}}
\newcommand{\beq}{\begin{equation}}
	\newcommand{\eeq}{\end{equation}}
\newcommand{\beqn}{\begin{equation*}}
	\newcommand{\eeqn}{\end{equation*}}
\newcommand{\bs}{\begin{split}}
	\newcommand{\es}{\end{split}}

\newcommand\norm[1]{ \left\| #1 \right\| }
\newcommand\jdz[1]{ \left| #1 \right| }
\newcommand\sk[1]{ \left( #1 \right) }
\newcommand\mk[1]{ \left[ #1 \right] }
\newcommand\lk[1]{ \left\{ #1 \right\} }

\renewcommand\d{{\rm d}}

\begin{document}

	
	
	\title[Hypercontractivity for quantum Ornstein-Uhlenbeck semigroups]{Hypercontractivity for a family of quantum Ornstein-Uhlenbeck semigroups}
	
	\thanks{{\it 2020 Mathematics Subject Classification:} Primary: 46L53, 47D07. Secondary: 81Q10, 33C45}
	\thanks{{\it Key words:} Quantum Ornstein-Uhlenbeck semigroups, hypercontractivity, optimal time, CCR, Meixiner polynomials}

	\author[Longfa Sun]{Longfa Sun}
	\address{Hebei key laboratory of physics and energy technology, School of mathematics and physics, North China Electric Power University, Baoding 071003, China}
	\email{sun.longfa@ncepu.edu.cn}
	
	\author[Zhendong  Xu]{Zhendong Xu}
	\address{
		Department of Mathematical Sciences and the Research Institute of Mathematics, Seoul National University, Gwanak-ro 1, Gwanak-gu, Seoul 08826, Republic of Korea}
	\email{zhendong\_xu\_97@snu.ac.kr}

	\author[Hao Zhang]{Hao Zhang}
	\address{
		Instituto de Ciencias Matem\'{a}ticas, Consejo Superior de Investigaciones Cient\'{i}ficas, C/ Nicol\'{a}s Cabrera 13-15, 28049, Madrid, Spain}
	\email{hao.zhang@icmat.es}
	\date{}
	\maketitle
	
	\begin{abstract}
		We show that a family of quantum Ornstein-Uhlenbeck semigroups is hypercontractive. We also obtain the optimal order of the optimal time up to a constant for those elements whose Gibbs state is zero. The main ingredient of our proof is Meixiner polynomials.
	\end{abstract}
	
	
	\section{Introduction}\label{Introduction}
	In the celebrated papers \cite{EN1966,EN1973}, Nelson showed that the classical Ornstein-Uhlenbeck semigroup is hypercontractive and utilized such property to establish the existence and uniqueness of the ground state of a special semigroup which arises from the constructive quantum field theory. Since then, hypercontractivity becomes a useful and powerful tool in quantum field theory, quantum statistical mechanics and quantum information. 
	
	Gross discovered the equivalence of log-Sobolev inequalities and hypercontractivity, and also gave a new proof of the hypercontractivity for the classical Ornstein-Uhlenbeck semigroup in the remarkable work \cite{LG1975}. This surprising observation led to the application of hypercontractivity in numerous fields, such as Boolean analysis, concentration inequalities, geometric inequalities, statistical physics, among others. We refer to \cite{LG1993,BGL2013} for more information.
	
	Quantum Markov semigroups, which are a generalization of classical Markov semigroups, are a fundamental tool to describe open quantum systems. In \cite{CFL2000}, Cipriani \emph{et al.} rigorously proved that a specific unbounded Lindblad-type operator generates a quantum Markov semigroup by virtue of the remarkable quantum semigroup theory established in \cite{C1992,C1997}. Such unbounded Lindblad-type operator has been extensively studied in quantum optics models of masers and lasers \cite{D1976,FRS1994}. The quantum Markov semigroup appearing in \cite{CFL2000}, which is now called quantum Ornstein-Uhlenbeck semigroup, has also been shown to be hypercontractive in the seminal work \cite{CS2008}.
	
	The need to construct quantum Markov semigroups on von Neumann algebras, which are symmetric with respect to a nontracial state, is clear for various applications to open quantum systems, quantum statistical mechanics and quantum information (see \cite{KP} for more information). In the remarkable paper \cite{KP}, Ko \emph{et al.} constructed a family of quantum Ornstein-Uhlenbeck semigroups, which can be seen as a generalization of the quantum Ornstein-Uhlenbeck semigroup appearing in \cite{CFL2000, D1976,FRS1994}.
	
	In this paper, we aim to study hypercontractivities of the quantum Ornstein-Uhlenbeck semigroups established in \cite{KP}. At first, we introduce such quantum Ornstein-Uhlenbeck semigroups. To this end, we present Weyl operators and CCR. 
	
	Let $H=\ell_2(\mathbb{N})$, where $\mathbb{N}=\{0,1,2, \cdots\}$ is the set of all natural numbers. Let $\{e_n\}_{n=0}^\infty$ be the canonical orthonormal basis of $H$. Then we define the creation and annihilation operators $a^*$ and $a$ as follows
	\[
	a^* e_n = \sqrt{n+1} \, e_{n+1}, \quad a e_n = \sqrt{n} \, e_{n-1}, \quad (e_{-1} := 0).
	\]
	It is clear that they satisfy the canonical commutation relation (CCR)
	\[
	[a, a^*] = 1,
	\]
	where $1$ denotes the identity operator by a slight abuse of notation. The number operator $N := a^* a$ acts as
	\[
	N e_n = n \cdot e_n.
	\]
	The position and momentum operators are defined as
	\[
	Q = \frac{1}{\sqrt{2}}(a + a^*), \quad P = \frac{1}{\sqrt{2}i}(a - a^*),
	\]
	and so
	\[
	[Q, P] = i \cdot 1.
	\]
	For any $z \in \mathbb{C}$, define the Weyl operators
	\[
	W(z) = e^{\frac {i}{\sqrt{2}}(z a^* + \bar{z} a)} = e^{i ( \Re z \cdot Q + \Im z \cdot P ) }.
	\]
	These operators satisfy the Weyl relation
	\[
	W(z)^*=W(-z), \quad \text{and} \quad \forall z, w\in\mathbb{C}, \ W(z) W(w) = e^{-\frac{i}{2} \Im (\bar{z} w)} W(z + w).
	\]
	Let $\mathcal{A}$ be the $C^*$-algebra generated by all Weyl operators. It is well-known that $\mathcal{A}\subsetneqq B(H)$ and $\mathcal{A}{''}=B(H)$. Let $\omega$ be the normal faithful state on $B(H)$, which is defined by
	$$ \omega(W(z))=e^{-\frac{|z|^2}{4} (1+e^{-\beta})(1-e^{-\beta})^{-1}}, \quad \forall \, z\in \mathbb{C}  $$
	where $\beta>0$ is a fixed inverse temperature. Note that $e^{-\beta N}$ is a trace class operator in $B(H)$. Let 
	$$ \rho=\dfrac{e^{-\beta N}}{\text{Tr}(e^{-\beta N})}=(1-e^{-\beta})e^{-\beta N}. $$
	Indeed, $\omega$ is a Gibbs state, and for any $x\in B(H)$,
	$$  \omega(x)=\text{Tr}(\rho x).  $$
	We refer the interested reader to \cite{BR} for more details. Now we introduce the quantum Ornstein-Uhlenbeck semigroups constructed in \cite{KP}. Let the parameters $\alpha_1, \alpha_2, \alpha_3 \in \mathbb{R}$ satisfy the following relations
	\begin{align*}
		& \frac{1}{2}\left(1+\alpha_2^2\right) \sinh (\beta / 2)=-\alpha_1 \cosh (\beta / 2), \\
		& \frac{1}{2}\left(\alpha_1^2+\alpha_3^2\right) \sinh (\beta / 2)=\alpha_2 \alpha_3 \cosh (\beta / 2).
	\end{align*}
	Let $G$ be the elliptic operator on $B(H)$ given by
	\begin{equation*}
		\begin{aligned}
			G(A)=\frac{\gamma}{2}\left(1+\alpha_2^2\right)[P,[P, A]]&+\frac{\gamma}{2}\left(\alpha_1^2+\alpha_3^2\right)[Q,[Q, A]]-i \gamma \alpha_1(Q[P, A]+[P, A] Q)\\
			&-i \gamma \alpha_2 \alpha_3(P[Q, A]+[Q, A] P), \quad \quad \forall \, A\in B(H),
		\end{aligned}
	\end{equation*}
	where $\gamma=\left(1+\alpha_2^2\right)^{-1}$ is the normalized constant. Since $P$ and $Q$ are unbounded operators affiliated to $\mathcal{M}$, the above is a formal expression.
	
	For $0<p\leq \8$, let $S_p$ be the Schatten $p$-class of $B(H)$ with the usual Schatten $p$-norm denoted by $\|\cdot\|_p$. We use Kosaki's definition \cite{Ko1984} for noncommutative $L_p$ spaces. 
	Let $L_p(\rho)$ be the closure of all elements in $\rho^{1/2}B(H)\rho^{1/2}$ with respect to the following norm
	$$  \| \rho^{1/2} x \rho^{1/2} \|_{L_p(\rho)}:= \| \rho^{1/2p} x \rho^{1/2p}\|_p, \quad  \forall \, x\in B(H). $$
	Define $\mathcal{G}^{(2)}$ on $S_2$ by
	$$  \mathcal{G}^{(2)}(\rho^{1/4}x\rho^{1/4})=\rho^{1/4}G(x)\rho^{1/4}, \quad \forall \, x\in B(H).  $$
	According to \cite{KP}, $P_t= e^{-t\mathcal{G}^{(2)}}$ ($\forall \, t\geq 0$) is a symmetric semigroup on $S_2$. For any $x\in B(H)$, define
	$$  \rho^{1/4} T_t^{(\8)}(x) \rho^{1/4}= P_t(\rho^{1/4}x\rho^{1/4}).  $$
	Then $T_t^{(\8)}$ is an ergodic quantum Markov semigroup on $S_\8$, and $\omega$ is the unique invariant state associated with $T_t^{(\8)}$. See \cite{KP} for more details. 
	
	For $1\leq p<\8$, denote by $T_t^{(p)}: L_p(\rho)\rightarrow L_p(\rho)$ the usual restriction of $T_t^{(\8)}$ to $L_p(\rho)$, by using the same way as in \cite{GL1995}. More explicitly, for $p=2$, if $x\in B(H)$, then
	$$  T_t^{(2)}(\rho^{1/2}x\rho^{1/2})=\rho^{1/4}P_t(\rho^{1/4}x\rho^{1/4})\rho^{1/4}. $$
	Denote by $\tau$ the spectral gap of $T_t^{(2)}$, which is defined in \eqref{gap}.
	
	We present the definition of hypercontractivity. Let $(T_t)_{t\geq 0}$ be an ergodic quantum Markov semigroup with the invariant state induced by $\rho$. Then $(T_t)_{t\geq 0}$ is said to be hypercontractive if for any $2<p<\8$, there exists $t_p>0$ such that for any $t \geq t_p$, and for any $ x\in B(H) $ with $\omega(x)=0$,
	\begin{equation}\label{hypercontractivity}
		\| T_t(\rho^{1/2}x\rho^{1/2}) \|_{L_p(\rho)} \leq \|\rho^{1/2}x\rho^{1/2}\|_{L_2(\rho)}.
	\end{equation}
	Moreover, the least $t_p$ is called the optimal time of $T_t$ with respect to the parameter $p$.
	
	Our result is the following theorem.
	
	\begin{thm}\label{MainThm}
		$(T_t^{(2)})_{t\geq 0}$ is hypercontractive. Moreover, the optimal time $t_p$ satisfies
		\[
		\wt{c}(\beta) (p - 1) \leq e^{2\tau t_p} \leq \wt{C}(\beta) (p - 1), \quad \forall\, 2 < p < \infty,
		\]
		where $\wt{c}(\beta)$ and $\wt{C}(\beta)$ are positive constants depending only on $\beta$.
	\end{thm}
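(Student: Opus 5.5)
The plan is to diagonalize the generator explicitly and reduce hypercontractivity to a classical statement about a discrete orthogonal-polynomial expansion. The semigroup $P_t$ on $S_2$ is symmetric and built from quadratic expressions in $a,a^*$, so it should preserve the decomposition of $S_2$ by off-diagonal degree: $x=\sum_{k\in\mathbb{Z}} x_k$ with $x_k$ supported on $\{e_{n+k}\otimes e_n\}$. On each fixed off-diagonal $k$, I expect $\mathcal{G}^{(2)}$ to act on the sequence $n\mapsto \langle e_{n+k},x e_n\rangle$ as a second-order difference operator (a birth–death generator) whose eigenfunctions are Meixner polynomials in $n$ with parameter $c=e^{-\beta}$ (weighted by the appropriate power of $c$ and a factor depending on $k$). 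The eigenvalues are affine in the polynomial degree $m$ and in $|k|$, of the form $\lambda_{m,k}=\tau(m+\theta|k|)+\cdots$, so that the spectral gap $\tau$ is attained at the first excited level. First I will compute these actions using $G$ written in terms of $a,a^*$ and the relations on $\alpha_i$. Then I will record the eigenbasis $\{\Phi_{m,k}\}$ of $T^{(2)}_t$, which is orthonormal in $L_2(\rho)$, together with the fact that the eigenvalue of each nonconstant element is at least $\tau$.

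For the upper bound I will follow the standard route. Write $y=\rho^{1/2}x\rho^{1/2}=\sum_{(m,k)\ne(0,0)} c_{m,k}\Phi_{m,k}$. Then
\[
\|T_t y\|_{L_p(\rho)}\le \sum_{j\ge1} e^{-\tau j t}\Big\|\sum_{\text{level } j} c_{m,k}\Phi_{m,k}\Big\|_{L_p(\rho)},
\]
where level $j$ groups the eigenvalues with $\lambda_{m,k}\in[\tau j,\tau(j+1))$. The key intermediate estimate is a Bonami/Khintchine-type moment bound on each level,
\[
\Big\|\sum_{\text{level } j} c\,\Phi\Big\|_{L_p(\rho)}\le \big(C(\beta)(p-1)\big)^{j/2}\Big\|\sum_{\text{level } j} c\,\Phi\Big\|_{L_2(\rho)}.
\]
I intend to prove this via the generating function of Meixner polynomials, which writes $\Phi_{m,k}$ as coefficients of explicit operators such as $(1-\cdot)^{N}$-type functions of $N$ multiplied by $a^{k}$ or $a^{*k}$. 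I will bound $L_p(\rho)$ norms of these with the Hölder inequality for Kosaki norms and with the explicit trace $\mathrm{Tr}(\rho^{s}e^{\lambda N})$. Alternatively, I can compare with the classical Poisson/negative-binomial hypercontractivity through the diagonal part, plus a noncommutative Khintchine argument in $k$. Summing the geometric series, $e^{-\tau t}\sqrt{C(\beta)(p-1)}<1/2$ suffices, which gives $e^{2\tau t_p}\le \wt C(\beta)(p-1)$. This level-wise moment estimate is the main obstacle. The delicate point is getting constants uniform in the number of terms and only exponential in $j$, since Meixner polynomials are not as neatly hypercontractive as Hermite polynomials, and the nontracial weights $\rho^{1/2p}$ interact with the off-diagonal shift $k$.

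For the lower bound I will test on a single first-level eigenvector $y=\Phi$ with eigenvalue $\tau$; concretely, something like $\rho^{1/2}(a+\text{const})\rho^{1/2}$ or $\rho^{1/2}(N-\omega(N))\rho^{1/2}$. Then $\|T_ty\|_p=e^{-\tau t}\|\Phi\|_p$. A direct computation of $\|\rho^{1/2p}\Phi_0\rho^{1/2p}\|_p$ using the geometric distribution of $\rho$ shows that this grows at least like $c(\beta)\sqrt{p-1}$; heuristically, $N$ is geometric, so its $p$-th moment behaves like $p\cdot$const, which is even stronger than $\sqrt p$. This forces $e^{2\tau t_p}\ge \wt c(\beta)(p-1)$. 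If $N$ gives growth faster than $\sqrt{p}$, this is consistent, because only a lower bound is needed. Combining the two bounds proves the theorem.
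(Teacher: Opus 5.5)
Your outer skeleton matches the paper's: a bound $c_p^k$ with $c_p\lesssim\sqrt{p}$ on each spectral level, summed as a geometric series as in Proposition~\ref{Prop2.1}, and a first-level eigenvector for the lower bound. However, the level-wise moment estimate is the entire content of the upper bound, and you only announce it. Worse, the structure you plan to derive it from is false in general, because the generator does not preserve off-diagonal degree. $D_1^*$ is built from $L_{a^*},R_{a^*}$ and $D_2^*$ from $L_a,R_a$, so $D_1^*\xi_0$ and $D_2^*\xi_0$ lie on opposite first off-diagonals. On the other hand, $D_1^*=2^{-1/2}(A_1^*+A_2^*)$ and \eqref{Eigen} give
\[
\mathcal{G}^{(2)}(D_1^*\xi_0)=\tfrac12\big((\tau_1+\tau_2)D_1^*\xi_0+(\tau_2-\tau_1)D_2^*\xi_0\big).
\]
The constraints on the $\alpha_i$ give $\tau_1=\tanh(\beta/2)$ and $\tau_2=\gamma(\alpha_1^2+\alpha_3^2)\tanh(\beta/2)$. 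So off-diagonals are mixed unless $\alpha_1^2+\alpha_3^2=1+\alpha_2^2$, i.e.\ unless $[P,[P,\cdot]]$ and $[Q,[Q,\cdot]]$ carry equal coefficients in $G$ (as in \cite{CFL2000}). In general the eigenvectors $(A_1^*)^m(A_2^*)^n\xi_0$ spread over several off-diagonals. Hence the picture your generating-function and Khintchine plans rest on, birth--death generators with Meixner eigenfunctions on single off-diagonals, does not exist.

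The missing idea is to estimate a space containing the eigenvectors instead of the eigenvectors themselves.
Every $\xi_{m,n}$ with $m+n=k$ lies in $L_k=\mathrm{span}\{(a^*)^ia^j\xi_0:i+j\le k\}$, so it suffices to prove \eqref{KL} on $L_k$. Split $x\in L_k$ into its $2k+1$ off-diagonal parts $x_m$. These are orthogonal in $S_2$, so the triangle inequality costs only $\sqrt{2k+1}\le 3^{k/2}$. Each $\rho^{1/(2p)-1/4}x_m\rho^{1/(2p)-1/4}$ is a diagonal operator times a shift, so its Schatten $p$-norm is, up to explicit factors, the $\ell_p(e^{-n\beta})$-norm of its entries $f_{k,n,m}$. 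For $m\ge0$ these entries are $\sqrt{(n+1)\cdots(n+m)}$ times a polynomial in $n$ of degree at most $(k-m)/2$, and $m<0$ reduces to this by a shift.

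Meixner polynomials enter only as orthogonal polynomials for the weight $e^{-n\beta}$, not as eigenfunctions of the semigroup. Expanding $q$ in them and using $|L_i(n)|\le\max\{n^i,i^i\}/i!$ together with Stirling gives $\|q\|_{\ell_p(e^{-n\beta})}\le(C(\beta)p)^{\deg q}\|q\|_{\ell_2(e^{-n\beta})}$. The loss is $p^{\deg q}$, not $p^{\deg q/2}$; for $q(n)=n^d$ the ratio is already of order $(p/2)^d$. It is the halved degree $\le k/2$ that produces $c_p\lesssim\sqrt p$. The square-root factors are handled by applying this to $(n+1)|q|^2$ and by induction on $m$ via shift identities.

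Your lower bound also needs repair. $\rho^{1/4}(N-\omega(N))\rho^{1/4}=(N-\omega(N))\xi_0$ is proportional to $D_1^*D_2^*\xi_0=\tfrac12\big((A_2^*)^2-(A_1^*)^2\big)\xi_0$, with eigenvalues $2\tau_2,2\tau_1$, not $\tau$. A level-one vector with $L_p$-norm of order $p$ would contradict your own level-one bound and force $e^{2\tau t_p}\gtrsim p^2$, so this is not ``consistent''. The right test vector (for $\tau=\tau_1$) is $A_1^*\xi_0\propto a^*\rho^{1/2}+\rho^{1/2}a$, which occupies both first off-diagonals. You therefore need a lower bound for the Schatten norm of such a symmetric two-off-diagonal matrix; the paper gets it by compressing onto the $2\times2$ blocks $\mathrm{span}\{e_n,e_{n+1}\}$. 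The entries $\sqrt{n+1}\,e^{-n\beta/p}$ then give $\|\cdot\|_p\gtrsim\sqrt p$, hence $e^{2\tau t_p}\ge\wt{c}(\beta)(p-1)$.
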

	
	We can also obtain some estimate of the optimal time of $(T_t^{(2)})_{t\geq 0}$ for all $ x\in B(H) $ without the restriction $\omega(x)=0$ with the help of Theorem \ref{MainThm}. For any $2<p<\8$, let $t_p'>0$ be the least constant such that for any $t \geq t_p'$, and for any $ x\in B(H) $,
	\begin{equation*}
		\| T_t^{(2)}(\rho^{1/2}x\rho^{1/2}) \|_{L_p(\rho)} \leq \|\rho^{1/2}x\rho^{1/2}\|_{L_2(\rho)}.
	\end{equation*}
	
	\begin{corollary}\label{MainThm2}
		For $2<p<\8$, the optimal time $t'_p$ satisfies
		\[
		\wt{c}_1(\beta) (p - 1) \leq e^{2\tau t'_p} \leq \wt{C}_1(\beta) (p - 1)^2,
		\]
		where $\wt{c}_1(\beta)$ and $\wt{C}_1(\beta)$ are positive constants depending only on $\beta$.
	\end{corollary}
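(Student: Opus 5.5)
We derive Corollary \ref{MainThm2} from Theorem \ref{MainThm} by splitting off the mean. Fix $x\in B(H)$ and write $x=\omega(x)1+y$ with $\omega(y)=0$. The ingredients we need are:
\begin{itemize}
\item[(i)] $T_t^{(2)}(\rho)=\rho$, since $\omega$ is invariant and $T_t^{(\8)}(1)=1$;
\item[(ii)] $\|\rho\|_{L_p(\rho)}=\|\rho^{1/p}\|_p=1$;
\item[(iii)] orthogonality in $L_2(\rho)$:
\[
\|\rho^{1/2}x\rho^{1/2}\|_{L_2(\rho)}^2=|\omega(x)|^2+\|\rho^{1/2}y\rho^{1/2}\|_{L_2(\rho)}^2 ,
\]
because $\mathrm{Tr}(\rho^{1/2}\rho^{1/2}y)=\omega(y)=0$ kills the cross term.
\end{itemize}
Set $a=|\omega(x)|$ and $b=\|\rho^{1/2}y\rho^{1/2}\|_{L_2(\rho)}$.

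\emph{Lower bound.} Taking $\omega(x)=0$ in the definition of $t_p'$ shows that $t'_p$ works for mean-zero elements. Hence $t'_p\ge t_p$, and the lower estimate of Theorem \ref{MainThm} gives
\[
e^{2\tau t'_p}\ge e^{2\tau t_p}\ge \wt c(\beta)(p-1).
\]

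\emph{Upper bound.} The naive estimate, via the triangle inequality,
\[
\|T_t(\rho^{1/2}x\rho^{1/2})\|_{L_p}\le a+\|T_t(\rho^{1/2}y\rho^{1/2})\|_{L_p},
\]
cannot beat $\sqrt{a^2+b^2}$ unless the second term is $O(b^2)$, which fails for small $b$. We therefore need a uniform-convexity type inequality in $L_p$ for $1+z$ with $z$ small and of mean zero. Normalize $a=1$. The target is
\[
\|\rho+T_t(\rho^{1/2}y\rho^{1/2})\|_{L_p(\rho)}^2\le 1+C(p-1)\,\|T_t(\rho^{1/2}y\rho^{1/2})\|_{L_2}^2+(\text{higher order}).
\]
This is a noncommutative analogue of the classical two-point/Bonami inequality $\|1+z\|_p^2\le 1+(p-1)\|z\|_p^2$ for mean-zero $z$. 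In the Kosaki-weighted setting it should follow from the optimal 2-uniform smoothness of $L_p$ (Ball--Carlen--Lieb, valid for Haagerup/Kosaki $L_p$ spaces), applied to $\rho$ and $\pm T_ty$:
\[
\tfrac12\big(\|\rho+z\|_p^2+\|\rho-z\|_p^2\big)\le \|\rho\|_p^2+(p-1)\|z\|_p^2 .
\]
Combined with the symmetry $y\mapsto -y$ this is not directly a bound for one sign. Instead, take the first-order term $\frac{d}{ds}\|\rho+sz\|_p^2\big|_{s=0}=2\,\mathrm{Re}\,\omega(z)$-type expression, which vanishes because $z$ has mean zero. Then control the remainder by the second-order bound, giving
\[
\|\rho+z\|_p^2\le 1+(p-1)\|z\|_p^2 .
\]

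Next apply Theorem \ref{MainThm} in the interpolated form
\[
\|T_s(\rho^{1/2}y\rho^{1/2})\|_{L_p}\le b \quad\text{for } e^{2\tau s}\ge \wt C(\beta)(p-1),
\]
together with the spectral gap estimate $\|T_u\|_{L_2^0\to L_2^0}\le e^{-\tau u}$. Take $t=s+u$. Then
\[
\|T_t(\rho^{1/2}y\rho^{1/2})\|_{L_p}\le \|T_s\big(T_u(\rho^{1/2}y\rho^{1/2})\big)\|_{L_p}\le e^{-\tau u}b ,
\]
so
\[
\|T_t(\rho^{1/2}x\rho^{1/2})\|_{L_p}^2\le 1+(p-1)e^{-2\tau u}b^2 .
\]
This is at most $1+b^2$ once $e^{2\tau u}\ge p-1$. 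The total requirement is then
\[
e^{2\tau t}=e^{2\tau s}e^{2\tau u}\ge \wt C(\beta)(p-1)^2 ,
\]
which yields the stated upper bound.

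The main obstacle is justifying the inequality $\|\rho+z\|_{L_p(\rho)}^2\le 1+(p-1)\|z\|_{L_p(\rho)}^2$ for mean-zero $z$ in Kosaki's symmetric $L_p(\rho)$. I would try to obtain it by transferring, via the isometric identification with Haagerup $L_p$, the Ricard--Xu form of 2-uniform convexity/smoothness, namely
\[
\|x+y\|^2\le \|x\|^2+2\,\mathrm{Re}\langle J(x),y\rangle+(p-1)\|y\|^2 ,
\]
with duality map $J(\rho)=\rho^{1/p'}$ up to normalization. Here $\langle J(\rho),z\rangle$ reduces to $\mathrm{Tr}(\rho^{1/2}y\rho^{1/2})\cdot\text{const}=\omega(y)=0$.
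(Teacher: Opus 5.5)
Your proof is correct and is essentially the paper's proof. The inequality you flag as the main obstacle, $\|\rho+z\|_{L_p(\rho)}^2\le 1+(p-1)\|z\|_{L_p(\rho)}^2$ for mean-zero $z$, is exactly the Ball--Carlen--Lieb convexity inequality the paper cites. Your derivation of it from the duality-map form of $(p-1)$-smoothness is valid, whereas the symmetric two-point form alone would not suffice. The lower bound is the same observation $t_p'\ge t_p$. The only difference is how you get decay of the mean-zero part: you factor $T_t=T_sT_u$ and use Theorem \ref{MainThm} at time $s$ plus the $L_2$ spectral gap over time $u$. The paper instead inserts the explicit bound $\frac{e^{-2\tau t}c_p^2}{1-e^{-2\tau t}c_p^2}$ from the proof of Proposition \ref{Prop2.1}, which yields the condition $e^{2\tau t}\ge p\,c_p^2$; both give the same $(p-1)^2$ order.
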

	
	Several comments are in order. Theorem \ref{MainThm} implies that $(p-1)^{1/2}$ is the optimal order of $e^{\tau t_p}$ up to a constant depending only on $\beta$. Our proof of Theorem \ref{MainThm} relies heavily on Meixiner polynomials. To the best of our knowledge, this is perhaps the first time that Meixiner polynomials are used to study quantum Ornstein-Uhlenbeck semigroups. Therefore, the proof of Theorem \ref{MainThm} is different from the previous ones. Throughout this paper, we do not care about the constants $\wt{c}(\beta)$ and $\wt{C}(\beta)$ as we are only concerned with the order $(p-1)^{1/2}$. In addition, we would like to mention that at the time of this writing, we do not know the optimal order of the optimal time $t_p'$. We come up with a conjecture on $t_p'$ at the end of this paper.
	
	$(T_t^{(2)})_{t\geq 0}$ is called quantum Ornstein-Uhlenbeck semigroups, but it does not behave like the classcial Ornstein-Uhlenbeck semigroup. Indeed, for the classcial Ornstein-Uhlenbeck semigroup \cite{EN1973} or the Ornstein-Uhlenbeck semigroup in the mixed $q$-Gaussian setting \cite{CL1993,BI1997}, or even for the Ornstein-Uhlenbeck semigroup on the \(q\)-Araki--Woods algebras \cite{LR2011}, the optimal time has been already calculated explicitly or estimated. Besides, they are all ultracontractive \cite{BM2021}. This is because the constant $c_p$ appearing in Proposition \ref{Prop2.1} is always a universal constant which does not depend on $p$ for all Ornstein-Uhlenbeck semigroups mentioned above except the quantum Ornstein-Uhlenbeck semigroups considered in this paper. We will see that for quantum Ornstein-Uhlenbeck semigroups, the constant $c_p$ appearing in Proposition \ref{Prop2.1} does depend on $p$, which is also the main difficulty in the estimate of $t_p$ and $t'_p$.

	In particular, when $\alpha_1=-\alpha_2$ and $\alpha_3=1$, the quantum Ornstein-Uhlenbeck semigroup $T_t^{(2)}$ is as same as in that in \cite{CFL2000}. On the one hand, in this special case, it has already been shown in \cite{CS2008} that $T_t^{(2)}$ is hypercontractive. The authors in \cite{CS2008} employed the birth and death processes in an ingenious way to show the hypercontractivity of $T_t^{(2)}$. However, they only showed that $T_t^{(2)}$ is contractive from $L_2(\rho)$ to $L_4(\rho)$ once $t$ is larger than some fixed time. They did not consider directly the contractivity of $T_t^{(2)}$ from $L_2(\rho)$ to $L_p(\rho)$ for $2\leq p\neq 4<\8$. It seems that their method does not yield the best estimate of optimal time when $p$ is very large. On the other hand, the best constants of $p$-log-Sobolev inequalities of $T_t^{(2)}$ have been calculated explicitly in the remarkable papers by Beigi and Rahimi-Keshari \cite{BK2024} for $1\leq p\leq 2$ and by Carlen and Maas \cite{CJ2017} for $p=1$ respectively. In \cite{BK2024}, Beigi \emph{et al.} also gave an estimate of the optimal time, but their coefficient of $\ln(p-1)$ for the optimal time is larger than ours given in Theorem \ref{MainThm} or Corollary \ref{MainThm2}. This also reveals that in general, the best constants of log-Sobolev inequalities cannot yield the optimal time, which is quite different from the case of the classical Ornstein-Uhlenbeck semigroup. In addition, we would like to emphasize that the quantum Ornstein-Uhlenbeck semigroups which we consider in this paper are much more general than that in \cite{CFL2000}.
	
	The paper is organized as follows. In Section \ref{Section2}, we present more details on the eigenvalues and corresponding eigenvectors of $T_t^{(2)}$. We also reduce the proof of Theorem \ref{MainThm} to that of Theorem \ref{Main-Lem}. Section \ref{Section3} is devoted to the proof of Theorem \ref{Main-Lem}. In Section \ref{Section4} and Section \ref{Section5}, we will give the estimate of the optimal time $t_p$ and $t'_p$ respectively. 
	
	\bigskip
	
	\section{Preliminaries}\label{Section2}
	
	\subsection{Eigenvalues and eigenvectors}
	In the proof of Theorem \ref{MainThm}, we need to use the eigenvalues of $\mathcal{G}^{(2)}$ and the associated eigenvectors on $S_{2}$. From \cite{KP}, the spectra of the generator $\mathcal{G}^{(2)}$ are discrete, i.e., $\sigma(\mathcal{G}^{(2)})=\left\{m \tau_1+n \tau_2: m, n=0,1,2, \ldots\right\}$, where $\tau_1=-2 \gamma \alpha_1, \tau_2=2 \gamma \alpha_2\alpha_3$. Denote by
	\begin{equation}\label{gap}
		\tau=\min\{\tau_1, \tau_2\}>0. 
	\end{equation}
	In the following, we are about to introduce eigenvectors of $\mathcal{G}^{(2)}$. 
	
	Let $J: S_2 \rightarrow S_2$ be the modular conjugation defined by
	$$ J(y)= y^*, \quad \text{for} \quad y \in S_2.$$
	Let $L_y$ and $R_y$ be the left multiplication and right multiplication operators respectively. Define
	$$j(x)=JxJ=R_{x^*}\in B(S_2)$$ 
	for $x\in B(H)$. Here, $x$ denotes the left multiplication $L_x$ on $S_2$. We write
	$$
	\begin{aligned}
		& D_1:=(2 \sinh (\beta / 2))^{-1 / 2}\left(e^{\beta / 4} a-e^{-\beta / 4} j\left(a^*\right)\right) \\
		& D_2:=(2 \sinh (\beta / 2))^{-1 / 2}\left(e^{-\beta / 4} a^*-e^{\beta / 4} j(a)\right).
	\end{aligned}
	$$
	A direct calculation shows that $D_1$ and $D_2$ satisfy the following canonical commutation relations
	$$
	\begin{gathered}
		{\left[D_i, D_j\right]=0, \quad\left[D_i^*, D_j^*\right]=0} \\
		{\left[D_i, D_j^*\right]=\delta_{i j} \mathbf{1}}.
	\end{gathered}
	$$
	Define
	$$ A_1=2^{-1 / 2}\left(D_1-D_2\right), \quad A_2=2^{-1 / 2}\left(D_1+D_2\right). $$
	Then $A_1$ and $A_2$ also satisfy the canonical commutation relations. Let $\xi_0=\rho^{1/2}$. In \cite{KP}, the authors showed that for any $m, n\in \mathbb{N}$
	\begin{equation}\label{Eigen}
		\mathcal{G}^{(2)}( (A_1^*)^m\left(A_2^*\right)^n \xi_0)=(m \tau_1+n \tau_2) (A_1^*)^m\left(A_2^*\right)^n \xi_0. 
	\end{equation}
	Besides, from the canonical commutation relations of $A_1$ and $A_2$, one has
	$$ \|(A_1^*)^m\left(A_2^*\right)^n \xi_0\|_{2}=\sqrt{m!}\cdot \sqrt{n!}.   $$
	By induction, it is clear that
	$$  (A_1^*)^m\left(A_2^*\right)^n \xi_0\in L_{m+n}:=\text{span}\{ (a^*)^i a^j\xi_0: 0\leq i, j\leq m+n, i+j\leq m+n \}.   $$
	We refer to \cite{KP} for more details on eigenvectors of $\mathcal{G}^{(2)}$. Denote by
	$$  \xi_{m, n}=\dfrac{1}{\sqrt{m!}\cdot\sqrt{n!}}(A_1^*)^m\left(A_2^*\right)^n \xi_0, \quad m, n=0,1,2, \cdots.  $$
	Thus $\{\xi_{m, n}\}$ is an orthonormal basis of $S_2$. 
	
	\subsection{Theorem \ref{Main-Lem} implies Theorem \ref{MainThm}}
	In this subsection, we aim to reduce the proof of Theorem \ref{MainThm} to that of Theorem \ref{Main-Lem}. In the sequel, we always assume $2\leq p<\8$.
	
	\begin{proposition}\label{Prop2.1}
		$T_t^{(2)}$ is hypercontractive if for any $2<p<\8$, there exists a constant $c_p>0$ such that for any $x \in L_k$, $k\in \nat$,
		\begin{align}\label{KL}
			\left\| \rho^{1/(2p) - 1/4} x \rho^{1/(2p) - 1/4}\right\|_{p} \leq c_p^k \left\|  x  \right\|_{2}.   
		\end{align}
		Moreover, the optimal time $t_p$ satisfies
		\begin{equation}\label{optimal-1}
			e^{2\tau t_p} \leq 2c_p^2, \quad \forall \, 2<p<\8.
		\end{equation}
	\end{proposition}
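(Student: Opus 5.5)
We work in the $S_2$ picture, where everything is diagonal. Take $x\in B(H)$ with $\omega(x)=0$ and set $y=\rho^{1/4}x\rho^{1/4}\in S_2$. Then
\[
\|\rho^{1/2}x\rho^{1/2}\|_{L_2(\rho)}=\|\rho^{1/4}x\rho^{1/4}\|_2=\|y\|_2 .
\]
By definition $T_t^{(2)}(\rho^{1/2}x\rho^{1/2})=\rho^{1/4}P_t(y)\rho^{1/4}$. Writing $P_t(y)=\rho^{1/4}z\rho^{1/4}$, we get
\[
\|T_t^{(2)}(\rho^{1/2}x\rho^{1/2})\|_{L_p(\rho)}=\|\rho^{1/(2p)}z\rho^{1/(2p)}\|_p=\|\rho^{1/(2p)-1/4}P_t(y)\rho^{1/(2p)-1/4}\|_p .
\]
So the left-hand side of \eqref{hypercontractivity} is exactly the quantity controlled in \eqref{KL}, evaluated at $P_t(y)$. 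We may reduce by density to finite linear combinations of the $\xi_{m,n}$; the $L_p$ bound then passes to general $y$ by lower semicontinuity of the norm under $S_2$-approximation.

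Next, expand $y=\sum_{m,n}c_{m,n}\xi_{m,n}$ in the orthonormal basis of eigenvectors. The condition $\omega(x)=0$ means $\langle \xi_0,y\rangle=\mathrm{Tr}(\rho^{1/2}\rho^{1/4}x\rho^{1/4})=\omega(x)=0$, so $c_{0,0}=0$. Group the expansion by total degree $k=m+n\ge1$: put $y_k=\sum_{m+n=k}c_{m,n}\xi_{m,n}\in L_k$, so that $\sum_k\|y_k\|_2^2=\|y\|_2^2$. By \eqref{Eigen},
\[
P_t(y_k)=\sum_{m+n=k}e^{-t(m\tau_1+n\tau_2)}c_{m,n}\xi_{m,n}\in L_k ,
\]
and since $m\tau_1+n\tau_2\ge k\tau$, orthogonality gives $\|P_t(y_k)\|_2\le e^{-k\tau t}\|y_k\|_2$.

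Now apply the triangle inequality in $S_p$, then \eqref{KL} on each $L_k$:
\[
\|\rho^{1/(2p)-1/4}P_t(y)\rho^{1/(2p)-1/4}\|_p\le\sum_{k\ge1}c_p^k e^{-k\tau t}\|y_k\|_2 .
\]
Cauchy--Schwarz bounds the right-hand side by
\[
\Big(\sum_{k\ge1}(c_pe^{-\tau t})^{2k}\Big)^{1/2}\|y\|_2 .
\]
Let $r=c_p^2e^{-2\tau t}$. The geometric sum equals $r/(1-r)$, which is at most $1$ exactly when $r\le 1/2$, i.e. when $e^{2\tau t}\ge 2c_p^2$. Hence \eqref{hypercontractivity} holds for all $t$ with $e^{2\tau t}\ge2c_p^2$, which gives hypercontractivity and $e^{2\tau t_p}\le 2c_p^2$. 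Implicitly $c_p\ge1$ may be assumed; otherwise replace it by $\max(c_p,1)$, which only weakens the hypothesis.

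The main obstacle is the first step: the identification of norms together with the density/closure argument, in particular making sure $P_t(y)$ lies in the domain where the left side of \eqref{KL} makes sense (note $\rho^{1/(2p)-1/4}$ is unbounded for $p>2$). I would handle this by proving the estimate for finite sums, where everything lies in $\bigoplus_{k\le K}L_k$ and the operators are well defined by \eqref{KL}. I would then pass to the limit using the fact that $T_t^{(2)}$ is already a contraction on $L_2(\rho)$ and that the $L_p(\rho)$-norm is lower semicontinuous along $L_2$-convergent sequences.
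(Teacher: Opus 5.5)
Your proof is correct and is essentially the paper's argument: expand in the eigenbasis $\xi_{m,n}$ (with $c_{0,0}=0$ from $\omega(x)=0$), group by total degree $k$, apply the triangle inequality and \eqref{KL} on each block, then use Cauchy--Schwarz and the geometric series to get the threshold $e^{2\tau t}\ge 2c_p^2$. Your density/lower-semicontinuity remark covers a point the paper sidesteps by working only with finitely supported coefficients, and your caveat about $c_p\ge 1$ is unnecessary, since \eqref{KL} applied to $\xi_0\in L_1$ already forces $c_p\ge 1$.
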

	
	\begin{proof}
		Assume that $\{c_{n, m}: m, n=0, 1, 2, \cdots\}\subset \mathbb{C}$ has finite non-zero terms, and suppose
		\[  
		x=\rho^{1/4}y\rho^{1/4}=\sum_{m, n=0}^{\8} c_{m, n} \xi_{m, n}, \quad  \| \rho^{1/4}y\rho^{1/4} \|_2 = \sk{ \sum_{m, n=0}^{\8} |c_{m, n}|^2 }^{1/2}.  
		\]
		Then by \eqref{Eigen}, one has
		\[
		T_t^{(2)}(\rho^{1/2}y\rho^{1/2})=\rho^{1/4}P_t(\rho^{1/4}y\rho^{1/4})\rho^{1/4}=\sum_{m, n=0}^{\8} c_{m, n} e^{-t(m \tau_1+n \tau_2)} \rho^{1/4}\xi_{m, n}\rho^{1/4}.   
		\]
		Now assume $\omega(y) = 0$, and we have
		\[
		x=\rho^{1/4} y \rho^{1/4} = \sum_{k = 1}^\infty \sum_{ m+n=k} c_{m, n}   \xi_{m, n}.
		\]
		The left-hand-side of \eqref{hypercontractivity} can be estimated by
		\begin{align*}
			\| T_t^{(2)}(\rho^{1/2}y\rho^{1/2}) \|_{L_p(\rho)} & = \norm{ \sum_{k = 1}^\infty \sum_{ m+n=k} c_{m, n} e^{-t(m \tau_1+n \tau_2)} \rho^{1/2p-1/4}\xi_{m, n}\rho^{1/2p-1/4} }_p \\
			& \leq \sum_{k=1}^{\8}\left\|\sum_{m+n=k} c_{m, n} e^{-t(m \tau_1+n \tau_2)} \rho^{1/2p-1/4}\xi_{m, n}\rho^{1/2p-1/4}\right\|_{p}\\
			\text{(by \eqref{KL})}	& \leq \sum_{k=1}^{\8} c_p^k \norm{ \sum_{m+n=k} e^{ - t(m \tau_1+n \tau_2) } c_{m, n} \xi_{m, n} }_2 \\
			& = \sum_{k=1}^{\8} c_p^k \sk{ \sum_{m+n=k} e^{ - 2t(m \tau_1+n \tau_2) } |c_{m, n}|^2 }^{1/2},
		\end{align*}
		which yields
		\begin{align*}
			\| T_t^{(2)}(\rho^{1/2}y\rho^{1/2}) \|_{L_p(\rho)} & \leq \sum_{k=1}^{\8} e^{-k\tau t} c_p^k \left(\sum_{m+n=k} |c_{m, n}|^2\right)^{1/2}\\
			& \leq  \left(\sum_{k=1}^{\8} e^{-2k\tau t} c_p^{2k} \right)^{1/2} \left( \sum_{k = 1}^\infty \sum_{m + n=k}^{\8} |c_{m, n}|^2\right)^{1/2} \\
			& = \sk{ \frac{ e^{-2\tau t} c_p^2 }{ 1 - e^{-2\tau t}c_p^2 } }^{1/2} \| \rho^{1/2} y \rho^{1/2} \|_{L_2(\rho)}.
		\end{align*}
		Then we obtain
		\begin{align*}
			\| T_t^{(2)}( \rho^{1/2} y \rho^{1/2} ) \|_{L_p(\rho)} & \leq \| \rho^{1/2} y \rho^{1/2} \|_{L_2(\rho)},
		\end{align*}
		where we require
		\[
		\frac{ e^{-2\tau t} c_p^2 }{ 1 - e^{-2\tau t}c_p^2 } \leq 1 \quad \text{i.e.} \quad e^{2\tau t} \geq 2c_p^2.
		\]
		Hence, the optimal time $t_p$ satisfies
		\[
		e^{2\tau t_p} \leq 2c_p^2.
		\]
		We complete the proof.
	\end{proof}
	
	Note that \eqref{KL} is trivial when $k=0$. Now we are going to show \eqref{KL} for $k\geq 1$. So suppose $k\geq 1$ and
	$$  x=\sum_{i+j\leq k} c_{i, j} (a^*)^i a^j \xi_0\in L_k,$$
	where $c_{i, j}\in \mathbb{C}$. In what follows, $i$ and $j$ are always non-negative integers. For $-k\leq m\leq k$, let 
	$$ x_m=\sum_{i+j\leq k, j-i=m} c_{i, j} (a^*)^i a^j \xi_0.  $$
	Denote by
	$$  d_{n, i}=\left(\dfrac{n!}{(n-i)!}\right)^{1/2}, \quad n\geq i, n\geq 0.  $$
	Then by direct calculation, for $j-i=m$,
	$$ (a^*)^i a^j=\sum_{\substack{n\geq i, n\geq 0 \\ n+m\geq j, n+m\geq 0}}^{\8} d_{n, i} d_{n+m, j} e_n\otimes e_{n+m}.  $$
	Thus,
	\begin{align*}
		x_m&=\sum_{i+j\leq k, j-i=m} c_{i, j} (a^*)^i a^j \xi_0 \\
		&=(1-e^{-\beta})^{1/2} \sum_{i+j\leq k, j-i=m}  c_{i, j} \sum_{\substack{n\geq i, n\geq 0 \\ n+m\geq j, n+m\geq 0}}^{\8} e^{-(n+m)\beta/2} d_{n, i} d_{n+m, j} e_n\otimes e_{n+m}\\
		&= (1-e^{-\beta})^{1/2} \sum_{ \substack{2i+m\leq k \\  i\geq 0, i+m\geq 0} }  c_{i, i+m} \sum_{ n\geq i} e^{-(n+m)\beta/2} d_{n, i} d_{n+m, i+m} e_n\otimes e_{n+m}\\
		&= (1-e^{-\beta})^{1/2} \sum_{n=0}^\8   \left( \sum_{ \substack{2i+m\leq k \\ i+m\geq 0, 0\leq i \leq n} } c_{i, i+m}e^{-(n+m)\beta/2} d_{n, i} d_{n+m, i+m} \right) e_n\otimes e_{n+m}.
	\end{align*}
	Denote by
	$$ f_{k, n, m}= \sum_{ \substack{2i+m\leq k \\ i+m\geq 0, 0\leq i \leq n} } c_{i, i+m}d_{n, i} d_{n+m, i+m}.   $$
	For convenience, we set $f_{k, n, m} = 0$ whenever $n + m < 0$. Direct calculation shows that
	\begin{align*}
		& \rho^{1/(2p)-1/4} x_m \rho^{1/(2p)-1/4} \\
		=\ & (1-e^{-\beta})^{1/p}  \sum_{n=0}^\8   \left( e^{-n\beta(1/(2p)-1/4)} e^{-(n+m)\beta(1/(2p)-1/4)} e^{-(n+m)\beta/2} f_{k, n, m}  \right) e_n\otimes e_{n+m} \\
		=\ & (1-e^{-\beta})^{1/p} e^{-m(1/(2p) + 1/4)} \sum_{n=0}^\8 e^{-n\beta/p}  f_{k, n, m} e_n\otimes e_{n+m} .   
	\end{align*}
	Then for all $p \geq 2$,
	\begin{align}\label{eq:p-norm}
		& \left\| \rho^{1/2p-1/4} x_m \rho^{1/2p-1/4}\right\|_{p} \notag\\
		=\ & (1-e^{-\beta})^{1/p} e^{-m(1/(2p) + 1/4)} \sk{ \sum_{n \geq 0} e^{-n \beta} |f_{k, n, m}|^p }^{1/p}.
	\end{align}
	In particular,
	\begin{align*}
		\| x_m \|_2 & = (1-e^{-\beta})^{1/2} e^{-m/2} \sk{ \sum_{n \geq 0} e^{-n \beta} |f_{k, n, m}|^2 }^{1/2}.
	\end{align*}
	
	With these notations above, we have the following proposition.
	\begin{proposition}\label{Prop2.2}
		The inequality \eqref{KL} holds if there exists a constant $C_p$ such that for all $k \geq 1$ and $-k \leq m \leq k$,
		\begin{equation}\label{KL3}
			\left( \sum_{n=0}^\8  e^{-n\beta}| f_{k, n, m} |^p  \right)^{1/p}  \leq e^{m\beta(1/2p-1/4)} C_p^k \left( \sum_{n=0}^\8 e^{-n\beta} | f_{k, n, m} |^2  \right)^{1/2}. 
		\end{equation}
		Moreover, the constant $c_p$ in \eqref{KL} satisfies 
		\[
		c_p \leq \sqrt{3}(1 - e^{-\beta})^{-1/2} C_p.
		\]
	\end{proposition}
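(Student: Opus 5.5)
Decompose $x\in L_k$ as $x=\sum_{m=-k}^{k}x_m$, bound each piece with \eqref{KL3}, and sum over $m$.

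\emph{Step 1: bound each $x_m$.} Insert \eqref{KL3} into \eqref{eq:p-norm} and compare with the formula for $\|x_m\|_2$. This gives
\[
\left\| \rho^{1/2p-1/4} x_m \rho^{1/2p-1/4}\right\|_{p} \leq (1-e^{-\beta})^{1/p-1/2}\,\Lambda_m\, C_p^k\,\|x_m\|_2 ,
\]
where $\Lambda_m$ is the quotient of the $m$-dependent exponential prefactors. These are $e^{-m\beta(1/(2p)+1/4)}$ on the $p$-side, the factor $e^{m\beta(1/2p-1/4)}$ coming from \eqref{KL3}, and $e^{-m\beta/2}$ on the $2$-side. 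The paper's displayed exponents omit $\beta$; I would recompute them with $\rho^{s}e_n=(1-e^{-\beta})^{s}e^{-n\beta s}e_n$.

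The total $m$-exponent is
\[
-m\beta\Bigl(\tfrac1{2p}+\tfrac14\Bigr)+m\beta\Bigl(\tfrac1{2p}-\tfrac14\Bigr)+\tfrac{m\beta}{2}=0,
\]
so $\Lambda_m=1$. This cancellation is exactly why the factor $e^{m\beta(1/2p-1/4)}$ was built into \eqref{KL3}. If the conventions instead leave a residual factor $e^{\pm m\beta\cdot(\text{const})}$ with $|m|\le k$, I would absorb it into $C_p^k$ by enlarging the constant. This step is the one to check most carefully. Since $1/p-1/2\le0$, we get $(1-e^{-\beta})^{1/p-1/2}\le(1-e^{-\beta})^{-1/2}$.

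\emph{Step 2: orthogonality.} The $x_m$ are supported on the distinct diagonals $e_n\otimes e_{n+m}$, so they are mutually orthogonal in $S_2$. Hence
\[
\|x\|_2^2=\sum_{m=-k}^{k}\|x_m\|_2^2 .
\]

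\emph{Step 3: sum over $m$.} By the triangle inequality in $S_p$ and then Cauchy--Schwarz over the $2k+1$ values of $m$,
\[
\left\| \rho^{1/2p-1/4} x \rho^{1/2p-1/4}\right\|_{p}\leq (1-e^{-\beta})^{-1/2}C_p^k\sum_{m}\|x_m\|_2\leq (1-e^{-\beta})^{-1/2}C_p^k\sqrt{2k+1}\,\|x\|_2 .
\]

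\emph{Step 4: absorb the dimension factor.} For $k\ge1$ we have $\sqrt{2k+1}\le 3^{k/2}=\sqrt3^{\,k}$. For $k\ge1$ we also have $(1-e^{-\beta})^{-1/2}\le\bigl((1-e^{-\beta})^{-1/2}\bigr)^{k}$, since the base is at least $1$. Together these give \eqref{KL} with
\[
c_p=\sqrt3\,(1-e^{-\beta})^{-1/2}C_p ,
\]
and the case $k=0$ is trivial. The only real obstacle is the exponent bookkeeping in Step 1; the rest is routine.
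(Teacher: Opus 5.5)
Your proof is correct and follows the paper's argument essentially step for step: bound each diagonal piece $x_m$ by inserting \eqref{KL3} into \eqref{eq:p-norm}, then sum over the $2k+1$ values of $m$ using the triangle inequality, Cauchy--Schwarz together with the orthogonality of the $x_m$ in $S_2$, and the estimate $\sqrt{2k+1}\le\sqrt3^{\,k}$. Your recomputation restoring the missing $\beta$ in the exponents $e^{-m\beta(1/(2p)+1/4)}$ and $e^{-m\beta/2}$ is right, so the $m$-dependent factors cancel exactly ($\Lambda_m=1$); the fallback of enlarging the constant is unnecessary and would in any case have weakened the stated bound $c_p\le\sqrt3(1-e^{-\beta})^{-1/2}C_p$.
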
 
	
	\begin{proof}
		Applying \eqref{KL3} to \eqref{eq:p-norm}, one has
		\begin{align*}
			\left\| \rho^{1/2p-1/4} x_m \rho^{1/2p-1/4}\right\|_{p} & \leq C_p^k(1-e^{-\beta})^{1/p} e^{-m/2} \sk{ \sum_{n \geq 0} e^{-n \beta} |f_{k, n, m}|^2 }^{1/2} \\
			& = C_p^k(1-e^{-\beta})^{1/p - 1/2} \| x_m \|_2 \leq \mk{ C_p (1 - e^{-\beta})^{-1/2} }^k \| x_m \|_2. 
		\end{align*}
		Then \eqref{KL} follows from
		\begin{align*}
			\left\| \rho^{1/2p-1/4} x \rho^{1/2p-1/4}\right\|_{p} & \leq \sum_{m=-k}^{k}\left\| \rho^{1/2p-1/4} x_m \rho^{1/2p-1/4}\right\|_{p}\\
			& \leq \sum_{m=-k}^{k} \mk{ C_p (1 - e^{-\beta})^{-1/2} }^k \left\| x_m \right\|_{2} \\
			& \leq \mk{ C_p (1 - e^{-\beta})^{-1/2} }^k (2k+1)^{1/2} \|x\|_2 \\
			& \leq \mk{\sqrt{3} (1 - e^{-\beta})^{-1/2}C_p }^k \| x \|_2.
		\end{align*}
		This completes the proof.
	\end{proof} 
	
	Our goal is to show that the constant $C_p$ in \eqref{KL3} satisfies
	\begin{equation}\label{KL4}
		C_p \approx (p - 1)^{1/2}\approx p^{1/2}.
	\end{equation}
	We will show the following theorem, which directly implies \eqref{KL3} and \eqref{KL4}.
	
	\begin{thm}\label{Main-Lem}
		Let $f_{k, n, m}$ be defined as before. For all $k \geq 1$, $-k \leq m \leq k$, there exists a constant $C(\beta)$ such that
		\begin{equation}
			\left( \sum_{n=0}^\8  e^{-n\beta}| f_{k, n, m} |^p  \right)^{1/p}  \leq e^{m\beta(1/2p-1/4)} \mk{ C(\beta)p }^{ k/2 } \left( \sum_{n=0}^\8 e^{-n\beta} | f_{k, n, m} |^2  \right)^{1/2}.
		\end{equation}
	\end{thm}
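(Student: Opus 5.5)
The plan is to recognise the $n$-dependence of $f_{k,n,m}$ as a weighted Meixner polynomial, and then prove a reverse Hölder (Nikolskii-type) inequality for polynomials of bounded degree with respect to the negative binomial weight.

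\emph{Step 1: reduce to $m\ge 0$ and make the weight explicit.} By the symmetry $e_n\otimes e_{n+m}\leftrightarrow e_{n+m}\otimes e_n$ (taking adjoints, which exchanges $c_{i,j}$ and $c_{j,i}$), the case $m<0$ should reduce to $m\ge0$ after the shift $n\mapsto n+m$. The factor $e^{-m\beta}$ produced by this shift is what the prefactor $e^{m\beta(1/2p-1/4)}$ absorbs; I would check this bookkeeping at the end. For $m\ge 0$,
\[
d_{n,i}d_{n+m,i+m}=\frac{\sqrt{n!\,(n+m)!}}{(n-i)!},
\]
so that
\[
f_{k,n,m}=\Big(\frac{(n+m)!}{n!}\Big)^{1/2} q(n), \qquad q(n)=\sum_{i} c_{i,i+m}\,\frac{n!}{(n-i)!}.
\]
Here $q$ is a polynomial in $n$ of degree at most $r:=\lfloor (k-m)/2\rfloor$. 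Put $c=e^{-\beta}$ and $w_n=\binom{n+m}{n}c^n$. Then the right-hand $\ell_2$ quantity equals $m!\sum_n w_n|q(n)|^2$, which is the $L_2$ norm of $q$ for the Meixner weight with parameters $(m+1,c)$.

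\emph{Step 2: pointwise bound via the Christoffel function.} Let $M_j=M_j(\cdot;m+1,c)$, $j\le r$, be the orthonormal Meixner polynomials for $w$. Expanding $q=\sum_{j\le r}\langle q,M_j\rangle M_j$ and applying Cauchy--Schwarz gives
\[
|q(n)|^2\le K_r(n)\sum_{n'} w_{n'}|q(n')|^2, \qquad K_r(n)=\sum_{j\le r}M_j(n)^2 .
\]
Hence
\[
\sum_n c^n|f_{k,n,m}|^p\le \Big(\sum_n w_n|q(n)|^2\Big)^{p/2}\sum_n c^n\Big(\tfrac{(n+m)!}{n!}K_r(n)\Big)^{p/2}.
\]

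\emph{Step 3, the main obstacle: a sharp kernel bound.} The whole difficulty is to show
\[
\frac{(n+m)!}{n!}\,K_r(n)\le C(\beta)^k\Big(1+\frac{n}{k}\Big)^{k}\cdot(\text{normalising powers of }c^{-m}).
\]
The naive bound $n^k$ would lose a factor $k^{k/2}$ after summation. To get the sharp form I would use the explicit hypergeometric expression $M_j(n)\propto {}_2F_1(-j,-n;m+1;1-c^{-1})$ and bound each term of the hypergeometric sum by $\binom{j}{\ell}\binom{n}{\ell}(c^{-1}-1)^\ell/(m+1)_\ell$. Combining this with $(n+m)!/n!\le (n+m)^m$ and the normalisation $h_j=c^{-j}j!/((m+1)_j(1-c)^{m+1})$ should give the claimed form, because $\binom{n}{\ell}/\binom{k}{\ell}$-type ratios produce $(1+n/k)^\ell$. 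If this direct term-by-term estimate is too lossy, my fallback is the generating function of the Meixner polynomials together with a Cauchy-integral bound on a circle of radius of order $k/(n+k)$.

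\emph{Step 4: sum in $n$.} With the kernel bound in hand, it remains to estimate
\[
\sum_n e^{-n\beta}\,(1+n/k)^{kp/2}.
\]
The summand is maximised near $n\approx kp/(2\beta)$. This gives the bound $(C'(\beta)p)^{kp/2}$ up to a harmless factor $O(kp)$, which is absorbed into $C^{kp}$. Taking $p$-th roots yields $[C(\beta)p]^{k/2}$, as required in Theorem~\ref{Main-Lem}.
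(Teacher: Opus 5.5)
Your route is genuinely different from the paper's and it can be completed. However, Step~3, which carries essentially all of the content, is only asserted, and your sketch of it has two bookkeeping errors that would matter.

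First, the $\ell$-th term of ${}_2F_1(-j,-n;m+1;1-c^{-1})$ is $\binom{j}{\ell}\binom{n}{\ell}(c^{-1}-1)^\ell/\binom{m+\ell}{\ell}$. That is $\ell!$ times what you wrote, and factors as large as $(k/2)!$ cannot be absorbed into $C^k$.

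Second, since $\sum_n e^{-n\beta}|f_{k,n,m}|^2=m!\sum_n w_n|q(n)|^2$, the quantity that must be bounded by $C(\beta)^k(1+n/k)^k$ is $\frac{(n+m)!}{n!}K_r(n)/m!$. This is the Christoffel function of the exact weight $\frac{(n+m)!}{n!}e^{-n\beta}$. No power of $c^{-m}$ can replace the $1/m!$: for $r=0$, $n=0$ your quantity equals $m!(1-c)^{m+1}$.

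With these corrections the termwise estimate does close. The $j$-th summand of the corrected kernel is $\binom{n+m}{m}\binom{m+j}{j}(1-c)^{m+1}c^j M_j(n)^2$, with $M_j$ the standard unnormalised Meixner polynomial.
\begin{itemize}
\item For $n\le k$: the bound $\binom{n}{\ell}\le 2^n$ gives $|M_j(n)|\le 2^n e^{j\beta}$, so the summand is at most $C^k$.
\item For $n\ge k\ge m+2j$: the ratios $n(n-1)\cdots(n-\ell+1)/(m+1)_\ell$ increase in $\ell\le j$, hence $|M_j(n)|\le e^{j\beta}n^j m!/(m+j)!$. The summand is then at most $C^k\, m!\,(n+m)^m n^{2j}/((m+j)!)^2$. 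By Stirling and convexity of $x\log x$ this is $\le C^k (n/(m+2j))^{m+2j}\le C^k(n/k)^k$.
\end{itemize}
After this, Step~4 is fine, as is the shift for $m<0$. The prefactor $e^{m\beta(1/(2p)-1/4)}$ is absorbed using $|m|\le k$.

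Compared with the paper: the paper never uses Meixner polynomials with parameter $m+1$.
\begin{itemize}
\item It proves the Nikolskii-type inequality only for plain polynomials and the weight $e^{-n\beta}$ (Lemma~\ref{Main-m=0}). It uses the family $L_i$ and the bound $|L_i(n)|\le\max\{n^i,i^i\}/i!$, which is the same mechanism as your $(1+n/k)^k$.
\item It treats $m=1$ by applying that lemma to $(n+1)|p_k(n)|^2$ at exponent $p/2$ (Lemma~\ref{Main-m=1}).
\item It climbs from $m-1$ to $m+1$ through the identity $f(n)=\sqrt{(n+m+1)/(n+1)}\,g(n+1)$ with $g\in F_{l,m-1}$. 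This costs a factor $(2e^{\beta/2})^m$, absorbed because $m\le k$.
\end{itemize}
Your argument is uniform in $m$ and needs neither the induction nor the squaring trick. It also explains the exponent $k/2$ directly: the degree $r\le (k-m)/2$ of $q$ plus the $n^{m/2}$ growth of the prefactor. The price is uniform-in-$m$ pointwise control of Meixner polynomials with varying parameter. The paper avoids this by using a single orthogonal family together with elementary algebraic relations among the spaces $F_{k,m}$.
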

	
	Let $\Gamma(s)$ be the Gamma function. We will need the following Stirling's formula, see \cite{A1965}. For all $s \geq 1$, one has
	\begin{equation}\label{Stirling}
		\sqrt{2\pi s}\sk{ \frac{s}{e} }^s < \Gamma(1 + s) < e\sqrt{s} \sk{ \frac{s}{e} }^s.
	\end{equation}
	
	\begin{lemma}
		For all $s\geq 1$,
		\begin{equation}\label{Stirling-2}
			e^{- \beta} \beta^{-(s + 1)} \sqrt{ \frac{ \pi s}{2} } \sk{ \frac{s}{e} }^s \leq \sum_{n \geq 0} e^{-n\beta} n^s \leq e(1 + \beta) \beta^{-(s+1)}\sqrt{s} \sk{ \frac{s}{e} }^s,
		\end{equation}
	\end{lemma}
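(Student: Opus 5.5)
The plan is to compare the series with the Gamma integral
\[
\int_0^\infty e^{-\beta x}x^s\,\d x=\Gamma(1+s)\,\beta^{-(s+1)},
\]
and then apply Stirling's formula \eqref{Stirling}. The comparison rests on two elementary facts about $g(x)=e^{-\beta x}x^s$ on $[0,\infty)$: it is unimodal, and a direct computation locates its maximum at $x_*=s/\beta$ with value $(s/(e\beta))^s$.

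\emph{Lower bound.} For $n\geq 1$ and $x\in[n-1,n]$ we have $x^s\leq n^s$ and $x+1\geq n$. Hence $e^{-\beta(x+1)}\leq e^{-\beta n}$, and so
\[
\int_{n-1}^{n}e^{-\beta(x+1)}x^s\,\d x\leq e^{-n\beta}n^s .
\]
Summing over $n\geq1$ gives
\[
\sum_{n\geq0}e^{-n\beta}n^s\geq e^{-\beta}\int_0^\infty e^{-\beta x}x^s\,\d x=e^{-\beta}\beta^{-(s+1)}\Gamma(1+s).
\]
The left inequality of \eqref{Stirling} then yields the bound $e^{-\beta}\beta^{-(s+1)}\sqrt{2\pi s}\,(s/e)^s$. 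This is stronger than the claimed lower bound, since $\sqrt{2\pi s}\geq\sqrt{\pi s/2}$.

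\emph{Upper bound.} I will use the standard estimate for a nonnegative unimodal function,
\[
\sum_{n\geq0}g(n)\leq\int_0^\infty g+\max g .
\]
To justify it, let $n_0=\lfloor x_*\rfloor$. For $n<n_0$, $g$ is increasing on $[n,n+1]$, so $g(n)\leq\int_n^{n+1}g$. For $n>n_0+1$, $g$ is decreasing on $[n-1,n]$, so $g(n)\leq\int_{n-1}^{n}g$. These intervals are disjoint, and the at most two remaining terms $g(n_0)$, $g(n_0+1)$ are handled by one extra $\max g$ together with the unused integral over $[n_0,n_0+1]$. Since $g(n_0+1)$ or $g(n_0)$ is bounded by the integral over that interval (by monotonicity on each side of $x_*$, with the minimum endpoint value below the average), at most one copy of $\max g$ is needed.

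This gives
\[
\sum_{n\geq0}e^{-n\beta}n^s\leq\Gamma(1+s)\beta^{-(s+1)}+\Big(\frac{s}{e\beta}\Big)^s .
\]
By the right inequality of \eqref{Stirling}, the first term is at most $e\beta^{-(s+1)}\sqrt s\,(s/e)^s$. The second term equals $\beta\cdot\beta^{-(s+1)}(s/e)^s$, which is at most $e\beta\,\beta^{-(s+1)}\sqrt s\,(s/e)^s$ because $\sqrt s\geq1$ and $e\geq1$. Adding the two gives $e(1+\beta)\beta^{-(s+1)}\sqrt s\,(s/e)^s$, as claimed.

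The main point requiring care is the unimodal sum–integral comparison, namely making sure that only one copy of $\max g$ is lost rather than two. Losing two would still give the result with $e(1+2\beta)$ in place of $e(1+\beta)$, so if that bookkeeping becomes awkward, a slightly cruder split suffices for the paper's purposes, where constants depending on $\beta$ are irrelevant.
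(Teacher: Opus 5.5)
Your proof is correct. For the upper bound you follow the paper's route: compare the sum of the unimodal function $h(x)=e^{-\beta x}x^s$ (your $g$) with $\int_0^\infty h=\beta^{-(s+1)}\Gamma(1+s)$, lose one copy of $\max h=(s/(e\beta))^s$, and apply the right half of \eqref{Stirling}.

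Your bookkeeping is in fact the careful version. You set aside the two terms at $n_0$ and $n_0+1$, which straddle the peak, and use $\int_{n_0}^{n_0+1}h\ge\min\{h(n_0),h(n_0+1)\}$. The paper instead pairs $h(n)$ with $\int_{n-1}^{n}h$ for all $n\ge m$. That pairing is not justified near the peak: at $n=m$ the function is increasing on $[m-1,m]$, and for $s=\beta=4$ one indeed has $\sum_{n\ge1}h(n)>\int_0^\infty h$. The final bound $\int_0^\infty h+\max h$ the paper reaches is nevertheless true.

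For the lower bound you take a genuinely different and simpler route. The pointwise estimate $e^{-\beta(x+1)}x^s\le e^{-n\beta}n^s$ on $[n-1,n]$ gives $\sum_{n\ge0}e^{-n\beta}n^s\ge e^{-\beta}\beta^{-(s+1)}\Gamma(1+s)$ for every $s\ge1$ at once. It uses only the monotonicity of each factor separately, not unimodality. The paper instead splits into two cases:
\begin{itemize}
\item for $s>2\beta^2/\pi$, it uses $\sum h(n)\ge\int_0^\infty h-\max h$ and needs $\beta<\sqrt{\pi s/2}$ to absorb the lost maximum;
\item for $1\le s\le 2\beta^2/\pi$, it keeps only the single term $h(m+1)$ with $m=\lfloor s/\beta\rfloor$.
\end{itemize}
What the paper's case analysis buys is a bound without the factor $e^{-\beta}$ when $s>2\beta^2/\pi$. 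Since the lemma as stated carries $e^{-\beta}$ anyway, your argument loses nothing and even yields $\sqrt{2\pi s}$ in place of $\sqrt{\pi s/2}$.
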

	\begin{proof}
		Define for $x\geq 0$
		\[
		h(x) = e^{-\beta x} x^s, \quad \beta > 0, \ s \geq 1.
		\]
		It is clear that $h'(x) = e^{-\beta x} x^{s-1}( -\beta x + s )$. Thus $h(x)$ increases in $[0, s/\beta]$, decreases in $[s/\beta, \infty)$ and achieves its maximum at $x = s/\beta$, i.e.
		\[
		h(x) \leq h\sk{ \frac{s}{\beta} } = \beta^{-s} \sk{ \frac{s}{e} }^s.
		\]
		Assume that there exists a nonnegative integer $m$ such that $ m \leq s/\beta < m + 1$. 
		
		At first, we show the right-sided inequality. If $m \geq 1$, then
		\begin{align*}
			\sum_{n \geq 0} e^{-n\beta} n^s & = \sum_{n = 0}^{m-1} h(n) + \sum_{n = m}^\infty h(n) \leq \sum_{n = 0}^{m-1} \int_{n}^{n+1} h(x)\, \d x + \sum_{n = m}^\infty \int_{n-1}^{n} h(x)\, \d x \\
			& = \int_0^\infty h(x)\,\d x + \int_{m-1}^m h(x)\, \d x \leq \int_0^\infty e^{-\beta x}x^s \,\d s + h\sk{ \frac{s}{\beta} } \\
			& = \beta^{-(s + 1)} \Gamma(1 + s) + \beta^{-s} \sk{ \frac{s}{e} }^s < e\sqrt{s} \beta^{-(s + 1)} \sk{ \frac{s}{e} }^s + \beta^{-s} \sk{ \frac{s}{e} }^s \\
			& <  e\beta^{-(s + 1)}(1 + \beta) \sqrt{s} \sk{ \frac{s}{e} }^s,
		\end{align*}
		where we use $s \geq 1$ in the last inequality. The case $m = 0$ follows similarly.
		
		For the remaining inequality, we consider two cases. First, suppose $ s > 2\beta^2/\pi $. If $m\geq 1$, then
		\begin{align*}
			\sum_{n \geq 0} e^{-n\beta} n^s & = \sum_{n = 1}^{m} h(n) + \sum_{n = m+1}^\infty h(n) \geq \sum_{n = 1}^{m} \int_{n-1}^{n} h(x)\, \d x + \sum_{n = m+1}^\infty \int_{n}^{n+1} h(x)\, \d x \\
			& = \int_0^\infty h(x)\,\d x - \int_{m}^{m+1} h(x)\, \d x \geq \int_0^\infty e^{-\beta x}x^s \,\d s - h\sk{ \frac{s}{\beta} } \\
			& = \beta^{-(s + 1)} \Gamma(1 + s) - \beta^{-s} \sk{ \frac{s}{e} }^s > \sqrt{2 \pi s} \beta^{-(s + 1)} \sk{ \frac{s}{e} }^s - \beta^{-s} \sk{ \frac{s}{e} }^s \\
			& = \beta^{-(s + 1)}(\sqrt{2\pi s} - \beta) \sk{ \frac{s}{e} }^s > \beta^{-(s + 1)}\sk{ \sqrt{2\pi s} - \sqrt{ \frac{\pi s}{2} } } \sk{ \frac{s}{e} }^s \\
			& = \beta^{-(s+1)} \sqrt{ \frac{ \pi s}{2} } \sk{ \frac{s}{e} }^s.
		\end{align*}
		The case $m = 0$ follows similarly. 
		
		Now for $ 1 \leq s \leq 2\beta^2/\pi $, recall that $ m \leq s/\beta < m + 1$, thus
		\begin{align*}
			\sum_{n \geq 0} e^{-n\beta} n^s & > e^{-(m + 1)\beta} (m + 1)^s > e^{-(s + \beta)}\sk{ \frac{s}{\beta} }^s = e^{-\beta}\beta^{-s} \sk{ \frac{s}{e} }^s \\
			& = e^{-\beta} \beta^{-(s + 1)} \frac{\beta}{\sqrt{s}} \sqrt{s}\sk{ \frac{s}{e} }^s \geq e^{- \beta} \beta^{-(s + 1)} \sqrt{ \frac{ \pi s}{2} } \sk{ \frac{s}{e} }^s.
		\end{align*}
		This proves \eqref{Stirling-2}.
	\end{proof}
	\bigskip
	
	\section{Proof of Theorem \ref{Main-Lem}}\label{Section3}
	In this section, we will show Theorem \ref{Main-Lem}, which finishes the proof of Theorem \ref{MainThm}. To begin with, we introduce Meixiner polynomials, which are the main ingredient for our proof of Theorem \ref{Main-Lem}. Recall that $p\geq 2$.
	\subsection{Orthogonal polynomials}
	We present a special family of Meixiner polynomials (or Laguerre polynomials), which is orthogonal with respect to the weight $w(n) = e^{-n\beta}$ ($n\in \nat$). For any $k \geq 0$, define
	\begin{equation}
		L_k(n) = e^{-k\beta}\sum_{j = 0}^k (-1)^{j} (e^{\beta} - 1)^j \binom{k}{j} \binom{n}{j}, \quad \forall \, n\in\nat.
	\end{equation}
	We have the following proposition, see \cite[Chapter 9]{koekoek2010}.
	\begin{proposition}\label{Orthogonal}
		The family $\lk{ L_0, L_1, \cdots, L_n, \cdots }$ is orthogonal with respect to the weight $w(n) = e^{-n \beta}$. More precisely, for any $m, l\in \nat$
		\begin{equation}\label{Orthogonal-eq1}
			\sum_{n = 0}^\infty e^{-n \beta} L_m(n) L_l(n) = \frac{ e^{-m\beta} }{ 1 - e^{-\beta} } \delta_{ml}, 
		\end{equation}  
		where $\delta_{ml}$ is the Kronecker delta function. 
	\end{proposition}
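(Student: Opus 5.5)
We will prove Proposition \ref{Orthogonal} by a generating-function computation: the generating function of the $L_k(n)$ turns out to be a simple rational function of the variable, and pairing two such functions against the weight $e^{-n\beta}$ gives a closed form whose power-series coefficients are exactly the claimed inner products.

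Write $q=e^{-\beta}\in(0,1)$ and $c=e^{\beta}-1=(1-q)/q$, so that $L_k(n)=q^k\sum_{j}(-c)^j\binom{k}{j}\binom{n}{j}$. For a complex variable $s$ with $|s|$ small, we compute $G(s,n):=\sum_{k\ge 0}L_k(n)s^k$. We exchange the sums over $k$ and $j$ and use $\sum_{k\ge j}\binom{k}{j}r^k=r^j(1-r)^{-j-1}$ with $r=qs$. This gives
\[
G(s,n)=\frac{1}{1-qs}\sum_{j=0}^{n}\binom{n}{j}\Bigl(\frac{-cqs}{1-qs}\Bigr)^j=\frac{1}{1-qs}\Bigl(1-\frac{(1-q)s}{1-qs}\Bigr)^n=\frac{(1-s)^n}{(1-qs)^{n+1}} .
\]

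Next, for small $|s|,|u|$ we evaluate $\sum_{n\ge0}q^nG(s,n)G(u,n)$. This is a geometric series in $n$ with ratio $q(1-s)(1-u)/((1-qs)(1-qu))$, which has modulus less than $1$ near $s=u=0$. It sums to
\[
\frac{1}{(1-qs)(1-qu)-q(1-s)(1-u)} .
\]
Expanding the denominator, the terms $\pm qs$ and $\pm qu$ cancel, and it equals $(1-q)-qsu(1-q)=(1-q)(1-qsu)$. Hence
\[
\sum_{n\ge0}q^nG(s,n)G(u,n)=\frac{1}{(1-q)(1-qsu)}=\sum_{m\ge0}\frac{q^m}{1-q}(su)^m .
\]

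Finally, we compare the coefficients of $s^mu^l$ on both sides. The left side has coefficient $\sum_n q^nL_m(n)L_l(n)$, and the right side has coefficient $\frac{e^{-m\beta}}{1-e^{-\beta}}\delta_{ml}$, which is \eqref{Orthogonal-eq1}.

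The only delicate step is justifying the interchange of the sums over $n$, $k$ and $l$ (and over $k$ and $j$). For this we will use the crude bound
\[
|L_k(n)|\le q^k\sum_j c^j\binom{k}{j}\binom{n}{j}\le q^k(1+c)^k2^n=2^n .
\]
Then $\sum_{n,k,l}q^n|L_k(n)||L_l(n)||s|^k|u|^l$ is dominated by $\sum_n(4q)^n\,(1-|s|)^{-1}(1-|u|)^{-1}$, which is finite only if $4q<1$. We therefore first choose $|s|,|u|<\epsilon$ and do not rely on this bound alone. Instead we use the sharper estimate $\sum_k|L_k(n)||s|^k\le \frac{(1+|s|)^n}{(1-q|s|)^{n+1}}$, obtained by running the same computation with all signs positive, that is, with $-c$ replaced by $c$ and $s$ by $|s|$. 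With this, the absolute double series is bounded by a geometric series with ratio $q(1+|s|)(1+|u|)/((1-q|s|)(1-q|u|))$, which is less than $1$ for small $|s|,|u|$. Fubini therefore applies, and coefficient comparison of the resulting analytic functions is legitimate.
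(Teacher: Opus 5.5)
Your proof is correct and complete. The paper itself gives no argument for this proposition; it simply cites the Meixner polynomials in \cite[Chapter 9]{koekoek2010}. Implicitly, the paper uses the identification $L_k(n)=e^{-k\beta}M_k(n;1,e^{-\beta})$, where $M_k(x;b,c)={}_2F_1(-k,-x;b;1-1/c)$. It combines this with the standard relation $\sum_{x\ge0}\frac{(b)_x}{x!}c^xM_m(x;b,c)M_k(x;b,c)=\frac{c^{-k}k!}{(b)_k(1-c)^b}\delta_{mk}$. Taking $b=1$, $c=e^{-\beta}$ and multiplying by $e^{-(m+k)\beta}$ gives \eqref{Orthogonal-eq1}.

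Your closed form $G(s,n)=(1-s)^n(1-qs)^{-n-1}$ is exactly the Meixner generating function $(1-t/c)^x(1-t)^{-x-b}$ at $b=1$, $t=qs$. So you are in effect reproducing the textbook generating-function proof in this special case. Your route buys self-containedness and a direct check of the normalization $e^{-m\beta}/(1-e^{-\beta})$. The citation route leaves it to the reader to translate between the paper's scaling factor $e^{-k\beta}$ and the hypergeometric normalization, though it is of course shorter.

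One minor point in your justification. Running the computation with all signs positive actually gives $\sum_k|L_k(n)||s|^k\le(1+(1-2q)|s|)^n(1-q|s|)^{-n-1}$. This is dominated by the bound you state, so your Fubini step and the coefficient comparison stand. The crude bound $|L_k(n)|\le 2^n$ plays no role and can be dropped.
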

	
	The following lemma is needed later.
	\begin{lemma}
		For any $k \in \nat$, we have
		\begin{equation}\label{Orthogonal-eq2}
			|L_k(n)| \leq \max\lk{ \frac{n^k}{k!}, \frac{k^k}{k!} }, \quad \forall\, n \in \nat.
		\end{equation}
	\end{lemma}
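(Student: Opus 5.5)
The plan is to rewrite $L_k(n)$ as an average of binomial coefficients with respect to a binomial probability distribution, and then bound each binomial coefficient uniformly by the right-hand side of \eqref{Orthogonal-eq2}.

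\emph{Step 1: reparametrize.} Set $q:=1-e^{-\beta}\in(0,1)$, so that $1-q=e^{-\beta}$. Splitting $e^{-k\beta}=e^{-j\beta}e^{-(k-j)\beta}$ gives $e^{-j\beta}(e^{\beta}-1)^j=q^j$. Hence
\[
L_k(n)=\sum_{j=0}^k \binom{k}{j} q^j(1-q)^{k-j}\,(-1)^j\binom{n}{j}.
\]
The weights $\binom{k}{j}q^j(1-q)^{k-j}$ are nonnegative and sum to $1$, since they are the probabilities of a $\mathrm{Bin}(k,q)$ distribution. By the triangle inequality,
\[
|L_k(n)|\le \max_{0\le j\le k}\binom{n}{j}.
\]
So it suffices to show that $\binom{n}{j}\le \max\{n^k/k!,\,k^k/k!\}$ for every $0\le j\le k$.

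\emph{Step 2: bound each binomial coefficient.} We use $\binom{n}{j}\le n^j/j!$, which also covers the case $j>n$, where the left side is $0$. Set $M:=\max\{n,k\}$, so that $n^j/j!\le M^j/j!$. We claim that $M^j/j!\le M^k/k!$ for $j\le k$. Indeed,
\[
\frac{k!}{j!}=\prod_{i=j+1}^{k} i\le k^{k-j}\le M^{k-j},
\]
and rearranging gives the claim. Finally, $M^k/k!$ equals $n^k/k!$ if $n\ge k$ and equals $k^k/k!$ otherwise, which is exactly \eqref{Orthogonal-eq2}.

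The only step needing care is Step 1, namely recognizing that the $\beta$-dependent factors combine into binomial probabilities. Once that is done, the elementary inequality $k!/j!\le M^{k-j}$ completes the argument. The case $k=0$ is trivial, since then $L_0\equiv 1$ and both bounds in the maximum equal $1$.
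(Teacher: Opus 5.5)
Your proof is correct and follows essentially the same route as the paper. The paper also bounds $\binom{n}{j}\le n^j k^{k-j}/k!$ and then uses the identity $e^{-k\beta}\sum_{j=0}^k\binom{k}{j}(e^{\beta}-1)^j=1$, which is your binomial-probability normalization. The only differences are presentational: the paper treats the cases $n\le k$ and $n>k$ separately, where you combine them with $M=\max\{n,k\}$.
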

	
	\begin{proof}
		We observe that when $j \leq n$ and $j\leq k$,
		\[
		\binom{n}{j} = \frac{ n(n - 1) \cdots (n - j + 1) }{j!} \leq \frac{n^j}{j!} = \frac{ n^j (j + 1)\cdots k }{k!} \leq \frac{n^j k^{k - j}}{k!}.
		\]
		We consider two cases. For $0 \leq n \leq k$, since $\binom{n}{j} = 0$ if $j > n$, we have
		\begin{align*}
			|L_k(n)| & \leq e^{-k\beta}\sum_{j = 0}^n (e^{\beta} - 1)^j \binom{k}{j} \binom{n}{j} \\
			& \leq e^{-k\beta}\sum_{j = 0}^n (e^{\beta} - 1)^j \binom{k}{j} \frac{n^j k^{k - j}}{k!} \\
			& \leq e^{-k\beta}\sum_{j = 0}^k (e^{\beta} - 1)^j \binom{k}{j} \frac{k^k}{ k! } = \frac{k^k}{k!}.
		\end{align*}
		For $n > k$, we have
		\begin{align*}
			|L_k(n)| & \leq e^{-k\beta}\sum_{j = 0}^k (e^{\beta} - 1)^j \binom{k}{j} \frac{n^j k^{k - j}}{k!} \\
			& \leq e^{-k\beta}\sum_{j = 0}^k (e^{\beta} - 1)^j \binom{k}{j} \frac{n^k}{ k! } = \frac{n^k}{k!}.
		\end{align*}
		This completes the proof.
	\end{proof}

	\subsection{Proof of Theorem \ref{Main-Lem}}
	We divide our proof of Theorem \ref{Main-Lem} into two steps. First, we prove that Theorem \ref{Main-Lem} holds for all $k \geq 1$, and $m = 0$ or $m=1$. Note that $m = 0$ implies
	\[
	\sum_{ \substack{2i+m\leq k \\ i+m\geq 0, 0\leq i \leq n} } c_{i, i+m}d_{n, i} d_{n+m, i+m} = \sum_{ \substack{2i \leq k \\ 0\leq i \leq n} } c_{i, i}d_{n, i}^2 = \sum_{ \substack{2i \leq k \\ 0\leq i \leq n } } c_{i, i} \dfrac{n!}{(n-i)!}.
	\]
	Hence $f_{k, n, 0}$ is a polynomial with respect to $n$ with degree not larger than $k/2$. We will prove Theorem \ref{Main-Lem} for all $k \geq 1$ and $m = 0$ by showing the following slightly stronger proposition below.
	\begin{lemma}\label{Main-m=0}
		For all $k \in \nat$ and any polynomial $p_k$ with degree $\deg p_k = k$, there exists a constant $C_1(\beta)$ such that
		\[
		\left( \sum_{n=0}^\8  e^{-n\beta}| p_k(n) |^p  \right)^{1/p}  \leq \mk{ C_1(\beta)p }^{k} \left( \sum_{n=0}^\8 e^{-n\beta} | p_k(n) |^2  \right)^{1/2}. 
		\]
	\end{lemma}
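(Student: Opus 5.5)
Expand $p_k$ in the orthogonal Meixner basis of Proposition \ref{Orthogonal}: $p_k=\sum_{j=0}^k b_j L_j$. Each $L_j$ has degree exactly $j$ in $n$, so $L_0,\dots,L_k$ span the polynomials of degree at most $k$. By \eqref{Orthogonal-eq1},
\[
\sum_{n\ge0} e^{-n\beta}|p_k(n)|^2=\frac{1}{1-e^{-\beta}}\sum_{j=0}^k |b_j|^2 e^{-j\beta}.
\]
By Minkowski in the weighted $\ell_p$ space,
\[
\Big(\sum_n e^{-n\beta}|p_k(n)|^p\Big)^{1/p}\le \sum_{j=0}^k |b_j|\,\|L_j\|_{\ell_p(w)} .
\]
If we show $\|L_j\|_{\ell_p(w)}\le (C\,p)^{j}\, e^{-j\beta/2}$ for a constant $C=C(\beta)$, then Cauchy--Schwarz gives
\[
\sum_j |b_j|\,\|L_j\|_{\ell_p(w)}\le \Big(\sum_j |b_j|^2e^{-j\beta}\Big)^{1/2}\Big(\sum_{j\le k}(Cp)^{2j}\Big)^{1/2}.
\]
Since $Cp\ge 2$ (enlarge $C$ if needed), the last factor is at most $2(Cp)^k$. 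Combined with the factor $(1-e^{-\beta})^{1/2}$ and the $j=0$ term, this gives the lemma with $C_1(\beta)$ absorbing all $\beta$-dependent constants; a $k$-th power of a constant is harmless since $C_1^k$ allows it.

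The main step is the per-polynomial bound on $\|L_j\|_{\ell_p(w)}$, via \eqref{Orthogonal-eq2}. For $j\ge1$,
\[
\sum_n e^{-n\beta}|L_j(n)|^p\le \sum_{n\le j}\Big(\frac{j^j}{j!}\Big)^p+\sum_n e^{-n\beta}\frac{n^{jp}}{(j!)^p}.
\]
The first part is at most $(j+1)e^{jp}$. For the second, apply \eqref{Stirling-2} with $s=jp\ge1$:
\[
\sum_n e^{-n\beta}n^{jp}\le e(1+\beta)\beta^{-(jp+1)}\sqrt{jp}\,(jp/e)^{jp}.
\]
Taking $p$-th roots, the factors $(e(1+\beta)\beta^{-1}\sqrt{jp})^{1/p}$ are bounded by $c(\beta)^j$, using $\sqrt{jp}^{1/p}\le e^{j}$. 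The main part becomes $\beta^{-j}(jp/e)^j/j!$. Using $j!\ge (j/e)^j$ from \eqref{Stirling}, this is at most $(p/\beta)^j$. Hence $\|L_j\|_{\ell_p(w)}\le (c'(\beta)p)^j$, and since $e^{-j\beta/2}$ is a fixed geometric factor, it is absorbed into the constant. The case $j=0$ is trivial, since $L_0=1$ and both norms are constants depending on $\beta$.

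The obstacle I anticipate is only bookkeeping: making sure every prefactor is at most $c(\beta)^j$ or $c(\beta)^k$, with no stray factor growing faster in $p$ than $p^k$. Examples are $(j+1)^{1/p}$ and $(jp)^{1/(2p)}$, which are both bounded by $e^j$. This is exactly what the crude bound \eqref{Orthogonal-eq2} combined with \eqref{Stirling-2} delivers. The resulting exponent $p^k$ (rather than $p^{k/2}$) is what the lemma claims. For $m=0$ the polynomial $f_{k,n,0}$ has degree at most $k/2$, which then yields the $p^{k/2}$ in Theorem \ref{Main-Lem}.
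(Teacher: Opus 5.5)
Your proposal is correct and follows essentially the same route as the paper: expansion in the Meixner basis, \eqref{Orthogonal-eq1} for the weighted $\ell_2$ norm, Minkowski for the $\ell_p$ norm, the bound \eqref{Orthogonal-eq2} combined with \eqref{Stirling} and \eqref{Stirling-2} at $s=jp$ to control each $L_j$, and Cauchy--Schwarz at the end (the paper bounds every $L_i$, $i\le k$, uniformly by $(C_2(\beta)p)^k$ and puts the weights $e^{i\beta}$ into the Cauchy--Schwarz factor, whereas you sum a geometric series). The only thing to tidy is $k=0$, where $C_1^k=1$ leaves nothing to absorb constants into, so check it directly: for constants the ratio of the two norms is $(1-e^{-\beta})^{1/2-1/p}\le 1$.
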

	
	\begin{proof}
		The case $k = 0$ is trivial. For $k \geq 1$, by Proposition \ref{Orthogonal}, there exist $c_i \in \com$ $(i=0, 1, \cdots , k)$ such that
		\[
		p_k(n) = \sum_{i = 0}^k c_i L_i(n), \quad \forall \, n\in\nat.
		\]
		Thus \eqref{Orthogonal-eq1} implies
		\begin{align}\label{L_k-l2}
			\left( \sum_{n=0}^\8 e^{-n\beta} | p_k(n) |^2  \right)^{1/2} & = \sk{ \sum_{i = 0}^k \sum_{n=0}^\8 |c_i|^2 e^{-n\beta} | L_i(n) |^2  }^{1/2} \notag\\
			& = \sk{ \sum_{i = 0}^k |c_i|^2 \frac{e^{-i \beta}}{ 1 - e^{-\beta} } }^{1/2} .
		\end{align}
		By the triangle inequality, the left-hand-side of Lemma \ref{Main-m=0} reads
		\begin{align*}
			\left( \sum_{n=0}^\8  e^{-n\beta}| p_k(n) |^p  \right)^{1/p} & \leq \sum_{i = 0}^k |c_i| \left( \sum_{n=0}^\8  e^{-n\beta}| L_i(n) |^p  \right)^{1/p}.
		\end{align*}
		Applying \eqref{Orthogonal-eq2}, \eqref{Stirling} and \eqref{Stirling-2}, one has
		\begin{align}\label{Orthogonal-eq3}
			\sum_{n=0}^\8  e^{-n\beta}| L_i(n) |^p & = \sum_{n=0}^i  e^{-n\beta}| L_i(n) |^p + \sum_{n=i + 1}^\8  e^{-n\beta}| L_i(n) |^p \notag \\
			& \leq \sum_{n=0}^i  e^{-n\beta} \frac{i^{ip} }{ (i!)^p} + \sum_{n=i + 1}^\8  e^{-n\beta} \frac{n^{ip} }{ (i!)^p}
			\notag \\
			& \leq \frac{ 1 }{ 1 - e^{-\beta} }  \frac{i^{ip} }{ \sk{2\pi i}^{p/2} \sk{ e^{-1}i }^{ip} } + \frac{ e(1 + \beta)\beta^{-ip - 1} \sqrt{ip} \sk{ e^{-1}ip }^{ip} }{ \sk{2\pi i}^{p/2} \sk{ e^{-1}i }^{ip} } \notag \\
			& \leq \sk{2\pi i}^{-p/2}\sqrt{ip} \sk{ \frac{1}{1 - e^{-\beta}} + e\sk{ 1 + \frac{1}{\beta} } } \sk{ e + \beta^{-1} }^{ip} p^{ip} \notag \\
			& \leq C_2(\beta)^{kp}p^{kp},
		\end{align}
		where we can take 
		\[
		C_2(\beta) = \sk{ \frac{1}{1 - e^{-\beta}} + e\sk{ 1 + \frac{1}{\beta} } } \sk{ e + \beta^{-1} }.
		\]
		Hence by \eqref{L_k-l2},
		\begin{align*}
			\left( \sum_{n=0}^\8  e^{-n\beta}| p_k(n) |^p  \right)^{1/p} & \leq \sum_{i = 0}^k |c_i| C_2(\beta)^k p^{k} \\
			& \leq C_2(\beta)^{k} p^{k}\sk{ \sum_{i = 0}^k |c_i|^2 \frac{e^{-i\beta}}{1 - e^{-\beta}} }^{1/2} \sk{ \sum_{i = 0}^k e^{i\beta}(1 - e^{-\beta}) }^{1/2} \\
			& \leq C_2(\beta)^{k} p^{k} e^{k\beta/2} \left( \sum_{n=0}^\8 e^{-n\beta} | p_k(n) |^2  \right)^{1/2}\\
			& \leq \mk{C_1(\beta)p}^k \left( \sum_{n=0}^\8 e^{-n\beta} | p_k(n) |^2  \right)^{1/2},
		\end{align*}
		where
		\[
		C_1(\beta) := e^{\beta/2} C_2(\beta).
		\]
		This completes the proof.
	\end{proof}
	
	We also need the following important lemma.
	\begin{lemma}\label{Main-m=1}
		For all $k \in \nat$ and any polynomial $p_k$ with degree $\deg p_k = k$, there exists a constant $C_3(\beta)$ such that
		\[
		\left( \sum_{n=0}^\8  e^{-n\beta}| \sqrt{n + 1} p_k(n) |^p  \right)^{1/p}  \leq \mk{ C_3(\beta)p }^{(2k + 1)/2} \left( \sum_{n=0}^\8 e^{-n\beta} | \sqrt{n + 1} p_k(n) |^2  \right)^{1/2}. 
		\]
	\end{lemma}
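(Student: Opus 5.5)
Our plan is to mimic the proof of Lemma \ref{Main-m=0}, but with the weight $e^{-n\beta}(n+1)$ in place of $e^{-n\beta}$. The key observation is that
\[
\sum_{n\ge0} e^{-n\beta}\,|\sqrt{n+1}\,p_k(n)|^2=\sum_{n\ge 0} e^{-n\beta}(n+1)|p_k(n)|^2 ,
\]
so the natural orthogonal system is the family of Meixner polynomials orthogonal with respect to $w_1(n)=(n+1)e^{-n\beta}$. These are $M_i(n)=M_i(n;2,e^{-\beta})$ in the notation of \cite[Chapter 9]{koekoek2010}, with explicit form
\[
M_i(n)=\sum_{j=0}^i (-1)^j \binom{i}{j}\binom{n}{j}\frac{j!}{(2)_j}(1-e^{\beta})^j\cdot(\text{normalization}),
\]
and squared norm $\frac{i!}{(2)_i}e^{i\beta}\cdot\frac{1}{(1-e^{-\beta})^2}$, up to the chosen normalization.

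We first expand $p_k=\sum_{i\le k} c_iM_i$. Orthogonality then gives
\[
\Big(\sum_n w_1(n)|p_k(n)|^2\Big)^{1/2}=\Big(\sum_i |c_i|^2 h_i\Big)^{1/2},
\]
where $h_i$ is the squared norm of $M_i$. For the $L_p$ side we apply the triangle inequality, writing $|\sqrt{n+1}\,p_k(n)|^p\le\big(\sum_i|c_i|\sqrt{n+1}\,|M_i(n)|\big)^p$. We then need a pointwise bound analogous to \eqref{Orthogonal-eq2}. Choosing the normalization $e^{-i\beta}$ as in $L_k$ and using $\binom{n}{j}\le n^jk^{k-j}/k!$ together with $j!/(2)_j\le1$, the same argument yields
\[
|M_i(n)|\le \max\{n^i,i^i\}/i!.
\]
Hence
\[
\sum_n e^{-n\beta}(n+1)^{p/2}|M_i(n)|^p\le \frac{1}{(i!)^p}\Big(\sum_{n\le i}e^{-n\beta}(i+1)^{p/2}i^{ip}+\sum_{n>i}e^{-n\beta}(n+1)^{ip+p/2}\Big).
\]
The second sum is bounded by \eqref{Stirling-2} with $s=ip+p/2$ (shifting $n+1$ costs a factor $e^{\beta}$), which gives roughly $\beta^{-s}(s/e)^s\sqrt s$. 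Dividing by $(i!)^p\ge(2\pi i)^{p/2}(i/e)^{ip}$ via \eqref{Stirling}, we obtain
\[
(s/e)^{s}/(i/e)^{ip}\lesssim (Cp)^{ip}\,(Cip)^{p/2}.
\]
Taking the $p$-th root yields a bound of order $(C(\beta)p)^{i}\sqrt{ip}\lesssim (C(\beta)p)^{(2i+1)/2}$, using $\sqrt i\le C^i$. This is precisely the $(2k+1)/2$ exponent in the statement; the extra half power comes from the factor $\sqrt{n+1}$.

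Finally, Cauchy–Schwarz gives
\[
\sum_i|c_i|(Cp)^{(2i+1)/2}\le\Big(\sum|c_i|^2h_i\Big)^{1/2}\Big(\sum_i h_i^{-1}(Cp)^{2i+1}\Big)^{1/2},
\]
and $h_i^{-1}\le C(\beta)^i$ after normalization, because $i!/(2)_i=1/(i+1)$ only contributes a polynomial factor $i+1\le 2^i$. These geometric factors are absorbed into $C_3(\beta)$, finishing the proof.

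The main obstacle is controlling the case $n\le i$ and the $m=0$ edge case $i=0$. When $i=0$ the bound must be $(Cp)^{1/2}$, which requires $\big(\sum_n e^{-n\beta}(n+1)^{p/2}\big)^{1/p}\lesssim \sqrt{p}$. This follows from \eqref{Stirling-2} with $s=p/2$, so the plan is consistent. The secondary care point is verifying the Meixner orthogonality constants for $\beta$-parameter $2$ from \cite{koekoek2010}. If that proves messy, a fallback is to write $\sqrt{n+1}\,p_k(n)$ and use Lemma \ref{Main-m=0} with Hölder, splitting $\|\sqrt{n+1}\,q\|_p\le\|\sqrt{n+1}\|_{2p}\|q\|_{2p}$. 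This loses only constants at the $L_p$ level, but at the $L_2$ level it requires comparing $\|q\|_{L_2(w)}$ with $\|q\|_{L_2(w_1)}$, which is trivial since $w\le w_1$. It would, however, give $p^{k}$ with the $2p$ index, so the constants would need adjusting.
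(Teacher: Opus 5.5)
Your main argument is correct, but it follows a different route from the paper's. The paper never leaves the parameter-one family $L_i$. For $p\ge 4$ it applies Lemma~\ref{Main-m=0} with exponent $p/2$ to the degree-$(2k+1)$ polynomial $(n+1)|p_k(n)|^2$, and then again with exponent $4$ to $(n+1)p_k(n)$. It must then compare $\sum_n e^{-n\beta}|(n+1)p_k(n)|^2$ with $\sum_n e^{-n\beta}(n+1)|p_k(n)|^2$, at a cost $C_4(\beta)^k$ obtained from \eqref{Orthogonal-eq2} and Stirling. The range $2\le p\le 4$ is treated separately by H\"older.

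You instead expand in the system orthogonal for the weight $(n+1)e^{-n\beta}$ (Meixner with parameter $2$) and rerun the proof of Lemma~\ref{Main-m=0} verbatim. This is uniform in $p\ge 2$ and avoids both the bootstrap and the $L_2$-comparison, at the price of importing a second orthogonality relation. The details go through:
\begin{itemize}
\item with $\widetilde M_i=e^{-i\beta}M_i$ one gets $h_i=e^{-i\beta}(i+1)^{-1}(1-e^{-\beta})^{-2}$;
\item the bound $|\widetilde M_i(n)|\le\max\{n^i,i^i\}/i!$ follows exactly as for $L_i$;
\item the range $n\le i$ contributes at most $(1-e^{-\beta})^{-1/2}\sqrt{i+1}\,i^i/i!\le C(\beta)^{i+1}$, with no growth in $p$, so the obstacle you flag is harmless;
\item the range $n>i$ gives $C(\beta)^{i+1}p^{i+1/2}$, as you computed.
\end{itemize}
There is one slip. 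The explicit formula is $M_i(n)=\sum_j\binom{i}{j}\binom{n}{j}\frac{j!}{(2)_j}(1-e^{\beta})^j$, without your extra $(-1)^j$. As you wrote it, every term is positive, so $M_1>0$ could not be orthogonal to $M_0=1$. Since the pointwise bound only uses absolute values, nothing downstream is affected.

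Your fallback deserves more credit than you give it. It is already a complete proof, shorter than both, and like the paper it uses only Lemma~\ref{Main-m=0}. Write $\|f\|_{q,w}=\bigl(\sum_n e^{-n\beta}|f(n)|^q\bigr)^{1/q}$.
\begin{itemize}
\item H\"older gives $\|\sqrt{n+1}\,p_k\|_{p,w}\le\|\sqrt{n+1}\|_{2p,w}\|p_k\|_{2p,w}$.
\item By \eqref{Stirling-2} with $s=p$, the first factor is at most $C(\beta)\sqrt p$. This is not a constant, but it is exactly the extra half power the statement allows.
\item Lemma~\ref{Main-m=0} at exponent $2p$ gives $\|p_k\|_{2p,w}\le[2C_1(\beta)p]^k\|p_k\|_{2,w}$.
\item Trivially, $\|p_k\|_{2,w}\le\|\sqrt{n+1}\,p_k\|_{2,w}$.
\end{itemize}
The product is at most $[C_3(\beta)p]^{k+1/2}$, which is the claim.
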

	
	\begin{proof}
		We consider two cases. For $p \geq 4$, applying Lemma \ref{Main-m=0} to the polynomial $(n + 1)|p_k(n)|^2$, one has
		\begin{align}\label{Orthogonal-Main-eq}
			\left( \sum_{n=0}^\8  e^{-n\beta}| \sqrt{n + 1} p_k(n) |^p  \right)^{1/p} & = \left( \sum_{n=0}^\8  e^{-n\beta}\sk{ (n + 1) |p_k(n)|^2 }^{p/2}  \right)^{1/p} \notag\\
			& \leq \sk{ \mk{C_1(\beta)p}^{2k+1} \left( \sum_{n=0}^\8 e^{-n\beta} \sk{  (n + 1)|p_k(n)|^2 }^2  \right)^{1/2} }^{1/2} \notag\\
			& = \mk{C_1(\beta)p}^{(2k+1)/2}  \sk{ \sum_{n=0}^\8 e^{-n\beta} (n + 1)^2|p_k(n)|^4  }^{1/4}.
		\end{align}
		Note that $(n + 1)^2 \leq (n + 1)^4$ for all $n \in \nat$, and applying Lemma \ref{Main-m=0} again to the polynomial $(n + 1)p_k(n)$ for $p = 4$, we obtain
		\begin{align}\label{Orthogonal-eq4}
			\sk{ \sum_{n=0}^\8 e^{-n\beta} (n + 1)^2|p_k(n)|^4  }^{1/4} & \leq \sk{ \sum_{n=0}^\8 e^{-n\beta} |(n + 1)p_k(n)|^4  }^{1/4} \notag \\
			& \leq \mk{4C_1(\beta)}^{k + 1}\sk{ \sum_{n=0}^\8 e^{-n\beta} |(n + 1)p_k(n)|^2 }^{1/2} .
		\end{align}
		By Proposition \ref{Orthogonal}, there exist $c_i \in \com$ $(i=0, 1, \cdots , k)$ such that
		\[
		p_k(n) = \sum_{i = 0}^{k} c_i L_i(n).
		\]
		Applying \eqref{Orthogonal-eq2}, we obtain
		\begin{align*}
			|np_k(n)| & \leq \sum_{i = 0}^{k} |c_i| n |L_i(n)| \leq \sum_{i = 0}^{k} |c_i| n \max\lk{ \frac{k^k}{k!}, \frac{n^k}{k!} } \leq  \sum_{i = 0}^{k} |c_i| \max\lk{ \frac{k^{k + 1}}{k!}, \frac{n^{k  +1}}{k!} } \\
			& \leq \max\lk{ \frac{k^{k + 1}}{k!}, \frac{n^{k  +1}}{k!} } \sk{ \sum_{i = 0}^k |c_i|^2 \frac{ e^{-i\beta} }{ 1 - e^{-\beta} } }^{1/2} \sk{ \sum_{i = 0}^k e^{i\beta} \sk{ 1 - e^{-\beta} } }^{1/2} \\
			& =e^{k\beta} \max\lk{ \frac{k^{k + 1}}{k!}, \frac{n^{k  +1}}{k!} } \sk{ \sum_{n=0}^\8 e^{-n\beta} |p_k(n)|^2 }^{1/2}.
		\end{align*}
		Hence by \eqref{Stirling} and \eqref{Stirling-2},
		\begin{align*}
			\sum_{n=0}^\8 e^{-n\beta} |np_k(n)|^2 & \leq e^{2k\beta}\sk{ \sum_{n=0}^\8 e^{-n\beta} |p_k(n)|^2 } \sk{ \sum_{n = 0}^k e^{-n \beta} \left(\frac{k^{k  +1}}{k!} \right)^2+ \sum_{n = k + 1}^\8 e^{-n \beta} \left(\frac{n^{k + 1}}{k!} \right)^2 } \\
			& \leq e^{2k\beta}\sk{ \sum_{n=0}^\8 e^{-n\beta} |p_k(n)|^2 } \\
			& \qquad \cdot \sk{ \frac{ e^{2k} k}{ 2\pi (1 - e^{-\beta}) } + \frac{ 4^{k+1} (1 + \beta)\beta^{-2k - 3}(k + 1) }{ 2\pi e }\sk{ 1 + \frac1k }^{2k + 1} } \\
			& \leq C_4(\beta)^k \sum_{n=0}^\8 e^{-n\beta} |p_k(n)|^2,
		\end{align*}
		where we can take
		\[
		C_4(\beta) = \frac{ 2e^{2\beta+2} }{ 1 - e^{-\beta} } + 16e^{2\beta+2}( \beta^{-3} + \beta^{-2} )  \max\lk{ 1, \beta^{-2} }.
		\]
		Then we obtain
		\begin{align*}
			\sum_{n=0}^\8 e^{-n\beta} |(n + 1)p_k(n)|^2 & = \sum_{n=0}^\8 e^{-n\beta} |np_k(n)|^2  + 2 \sum_{n=0}^\8 e^{-n\beta} n|p_k(n)|^2 + \sum_{n=0}^\8 e^{-n\beta} |p_k(n)|^2 \\
			& \leq \sk{C_4(\beta)^k + 1} \sum_{n=0}^\8 e^{-n\beta} |p_k(n)|^2 + 2 \sum_{n=0}^\8 e^{-n\beta} n|p_k(n)|^2 \\
			& \leq \sk{C_4(\beta)^k + 3}\sum_{n=0}^\8 e^{-n\beta} (n + 1)|p_k(n)|^2.
		\end{align*}
		Plug this into \eqref{Orthogonal-eq4}, and we obtain
		\begin{align*}
			\sk{ \sum_{n=0}^\8 e^{-n\beta} (n + 1)^2|p_k(n)|^4  }^{1/4} & \leq \mk{4C_1(\beta)}^{k + 1} \sk{C_4(\beta)^k + 3}^{1/2} \sk{ \sum_{n=0}^\8 e^{-n\beta} (n + 1)|p_k(n)|^2 }^{1/2} \\
			& \leq C_5(\beta)^{(2k + 1)/2}\sk{ \sum_{n=0}^\8 e^{-n\beta} (n + 1)|p_k(n)|^2 }^{1/2},
		\end{align*}
		where 
		\[
		C_5(\beta) = [4C_1(\beta)]^2 \sk{C_4(\beta) + 3}.
		\]
		Substituting this into \eqref{Orthogonal-Main-eq} yields
		\begin{align*}
			\left( \sum_{n=0}^\8  e^{-n\beta}| \sqrt{n + 1} p_k(n) |^p  \right)^{1/p} \leq \mk{C_1(\beta)C_5(\beta)p}^{(2k + 1)/2}\sk{ \sum_{n=0}^\8 e^{-n\beta} (n + 1)|p_k(n)|^2 }^{1/2}
		\end{align*}
		This completes the proof for $p \geq 4$.
		
		For $2 \leq p \leq 4$, by the H\"{o}lder inequality, we have
		\begin{align*}
			\left( \sum_{n=0}^\8  e^{-n\beta}| \sqrt{n + 1} p_k(n) |^p  \right)^{1/p} & \leq (1 - e^{-\beta})^{ (p - 4)/(4p) } \left( \sum_{n=0}^\8  e^{-n\beta}| \sqrt{n + 1} p_k(n) |^4  \right)^{1/4} \\
			& \leq  (1 - e^{-\beta})^{ -1/4 } C_5(\beta)^{(2k + 1)/2}\sk{ \sum_{n=0}^\8 e^{-n\beta} (n + 1)|p_k(n)|^2 }^{1/2} \\
			& \leq \mk{ (1 - e^{-\beta})^{ -1/4 } C_5(\beta) }^{ (2k + 1)/2 }\sk{ \sum_{n=0}^\8 e^{-n\beta} (n + 1)|p_k(n)|^2 }^{1/2} \\
			& \leq \mk{ (1 - e^{-\beta})^{ -1/4 } C_5(\beta)p/2 }^{ (2k + 1)/2 }\sk{ \sum_{n=0}^\8 e^{-n\beta} (n + 1)|p_k(n)|^2 }^{1/2}.
		\end{align*}
		Note that $ (1 - e^{-\beta})^{ -1/4 }/2 \leq C_1(\beta) $, we take 
		\[
		C_3(\beta) = C_1(\beta)C_5(\beta).
		\]
		This completes the proof.
	\end{proof}

	Now we are ready to prove Theorem \ref{Main-Lem} for all $k \in \nat$, $-k \leq m \leq k$. Define the function space
	\begin{equation*}
		F_{k, m} := \lk{ f: \nat \to \com: f(n) = \sum_{ \substack{2i+m\leq k \\ i+m\geq 0, 0\leq i \leq n} } c_{i, i+m}d_{n, i} d_{n+m, i+m}, \  c_{i, i+m}\in \mathbb{C}}. 
	\end{equation*}
	We need the following auxiliary lemma.
	\begin{lemma}
		Let $k \geq 1$ and $0 \leq m \leq k - 1$. For any $f \in F_{k, m + 1}$, there exists a function $g \in F_{k - 1, m}$ such that
		\begin{equation}\label{F_km-1}
			f(n) = (n + m + 1)^{1/2} g(n).
		\end{equation}
		Moreover, for any $f \in F_{k - 1, m + 1}$, there exists a function $g \in F_{k, m}$ such that
		\begin{equation}\label{F_km-2}
			(n + 1)^{1/2}f(n) = g(n + 1).
		\end{equation}
	\end{lemma}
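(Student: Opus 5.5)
Both statements follow from the identities
\[
d_{n+1,i+1}=\sqrt{n+1}\,d_{n,i},\qquad d_{N+1,j+1}=\sqrt{N+1}\,d_{N,j},
\]
which hold because $d_{n,i}^2=n!/(n-i)!=n(n-1)\cdots(n-i+1)$ is a falling factorial. Adding one to both indices therefore just adds one more factor at the top. The plan is to rewrite each generic element of the relevant $F_{k,m}$ term by term, pull out (or absorb) a square-root factor, and then check that the re-indexed coefficients satisfy the index constraints $2i+m\le k$, $i+m\ge 0$, $0\le i\le n$ defining the target space. Throughout I use the convention, already implicit in the definition of $f_{k,n,m}$, that terms with $i>n$ are absent, i.e.\ $d_{n,i}=0$ for $i>n$.

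For \eqref{F_km-1}, take $f(n)=\sum c_{i,i+m+1}\,d_{n,i}\,d_{n+m+1,i+m+1}$ with $2i+m+1\le k$, $i+m+1\ge 0$, $i\le n$. Since $m\ge 0$, we have $(n+m+1)!/(n-i)!=(n+m+1)\cdot (n+m)!/(n-i)!$, so $d_{n+m+1,i+m+1}=(n+m+1)^{1/2}d_{n+m,i+m}$. Hence $f(n)=(n+m+1)^{1/2}g(n)$ with
\[
g(n)=\sum c'_{i,i+m}\,d_{n,i}\,d_{n+m,i+m},\qquad c'_{i,i+m}:=c_{i,i+m+1}.
\]
The constraints for $g$ to lie in $F_{k-1,m}$ are checked as follows. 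The condition $2i+m\le k-1$ is exactly $2i+m+1\le k$. The condition $i+m\ge 0$ is automatic since $i,m\ge 0$. The condition $i\le n$ is unchanged.

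For \eqref{F_km-2}, take $f(n)=\sum c_{i,i+m+1}\,d_{n,i}\,d_{n+m+1,i+m+1}$ with $2i+m+1\le k-1$. We use $(n+1)^{1/2}d_{n,i}=d_{n+1,i+1}$, and we rewrite $d_{n+m+1,i+m+1}=d_{(n+1)+m,(i+1)+m}$. This gives
\[
(n+1)^{1/2}f(n)=\sum c_{i,i+m+1}\,d_{n+1,i+1}\,d_{(n+1)+m,(i+1)+m}.
\]
Setting $i'=i+1$ and $c'_{i',i'+m}:=c_{i'-1,i'+m}$ for $i'\ge 1$, together with $c'_{0,m}:=0$, defines $g\in F_{k,m}$. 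The constraints are checked as follows. The condition $2i'+m=2i+m+2\le k$ follows from $2i+m+1\le k-1$. The condition $i'+m\ge 0$ is clear. The condition $i'\le n+1$ is equivalent to $i\le n$. So $g(n+1)$ reproduces the sum exactly, with the same set of active terms.

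The only point needing care, and the one I expect to be the (mild) obstacle, is the bookkeeping of the truncation $0\le i\le n$ in the definition of $F_{k,m}$. One must confirm that after the shift $n\mapsto n+1$, $i\mapsto i+1$ no term appears or disappears. This is immediate from the falling-factorial interpretation: $d_{n,i}$ vanishes precisely when $i>n$, and $d_{n+1,i+1}$ vanishes precisely when $i+1>n+1$, i.e.\ under the same condition. One should also check that the value $g(0)$ is harmless. It involves only the $i'=0$ term, whose coefficient we set to $0$, and in any case \eqref{F_km-2} only asserts the identity at the argument $n+1\ge 1$.
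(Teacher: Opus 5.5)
Your proposal is correct and follows essentially the same route as the paper. Both use the falling-factorial identities $d_{n+m+1,i+m+1}=(n+m+1)^{1/2}d_{n+m,i+m}$ and $(n+1)^{1/2}d_{n,i}=d_{n+1,i+1}$, then reindex $i\mapsto i+1$ with the new $i'=0$ coefficient equal to zero, and check the index constraints. One small difference is in your favour: you note that $g(0)=0$ follows automatically from the vanishing $i'=0$ coefficient, which justifies the paper's ``we set $g(0)=0$'' (a fact used later in Lemma~\ref{lemma3.6}) more cleanly.
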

	
	\begin{proof}
		We first prove \eqref{F_km-1}. For any $f \in F_{k, m + 1}$, we have
		\[
		f(n) = \sum_{ \substack{2i+(m + 1)\leq k \\ i+(m + 1)\geq 0, 0\leq i \leq n} } c_{i, i+m+1}d_{n, i} d_{n+(m + 1), i+(m + 1)}.
		\]
		Note that
		\begin{align*}
			d_{n+m+1, i+m+1} & = \sk{ \frac{ (n + m + 1)! }{ (n - i)! } }^{1/2} = (n + m + 1)^{1/2} \sk{ \frac{ (n + m)! }{ (n - i)! } }^{1/2} \\
			& = (n + m + 1)^{1/2} d_{n + m, i + m}.
		\end{align*}
		Thus
		\begin{align*}
			f(n) & = (n + m + 1)^{1/2} \sum_{ \substack{2i+(m + 1)\leq k \\ i+(m + 1)\geq 0, 0\leq i \leq n} } c_{i, i+m+1}d_{n, i} d_{n+m, i+m } \\
			& = (n + m + 1)^{1/2} \sum_{ \substack{2i+m\leq k - 1 \\ i+m \geq 0, 0\leq i \leq n} } c_{i, i+m+1}d_{n, i} d_{n+m, i+m } \\
			& =: (n + m + 1)^{1/2}g(n),
		\end{align*}
		which verifies the first assertion. Thus we obtain \eqref{F_km-1}.
		
		Now we prove the second assertion. For any $f \in F_{k - 1, m + 1}$, we have
		\[
		(n + 1)^{1/2} f(n) = (n + 1)^{1/2} \sum_{ \substack{2i + (m + 1) \leq k - 1 \\ i + (m + 1) \geq 0 \\ 0 \leq i \leq n} } c_{i, i+m+1}d_{n, i} d_{n+m+1, i+m+1}.
		\] 
		We observe that
		\begin{align*}
			(n + 1)^{1/2}d_{n, i} & = (n + 1)^{1/2} \sk{ \frac{n!}{ (n - i)! } }^{1/2} = \sk{ \frac{(n + 1)!}{ (n + 1 - (i + 1))! } }^{1/2} = d_{n + 1, i + 1}.
		\end{align*}
		Hence
		\begin{align*}
			(n + 1)^{1/2}	f(n) & = \sum_{ \substack{2i + (m + 1) \leq k - 1 \\ i + (m + 1) \geq 0 \\ 0 \leq i \leq n} } c_{i, i+m+1}d_{n + 1, i + 1} d_{n+m+1, i+m+1} \\
			& =  \sum_{ \substack{2(i + 1) + m  \leq k \\ (i + 1) + m \geq 0 \\ 1 \leq i + 1 \leq n + 1} } c_{i, i+m+1}d_{n + 1, i + 1} d_{n+1 + m, i+1 + m} \\
			& = \sum_{ \substack{2j + m  \leq k \\ j + m \geq 0 \\ 1 \leq j \leq n + 1} } c_{j - 1, j+m}d_{n + 1, j} d_{n+1 + m, j + m} \\
			& = : g(n + 1).
		\end{align*}
		We set $g(0) = 0$, thereby $g \in F_{k, m}$. This verifies the second assertion \eqref{F_km-2}.
	\end{proof}
	
	Now we proceed to prove Theorem \ref{Main-Lem} for $k \in \nat$ and $0 \leq m \leq k$. We restate the following lemma, which is equivalent to Theorem \ref{Main-Lem} for $k \in \nat$ and $0 \leq m \leq k$.
	
	\begin{lemma}\label{lemma3.6}
		Let $k \in \nat$ be fixed. For all $0 \leq m \leq k$, we have
		\begin{equation}\label{P}
			\sk{ \sum_{n \geq 0} e^{-n \beta} | f_{k, n, m} |^p }^{1/p} \leq (2e^{\beta/2})^{m} [C_3(\beta)p]^{k/2} \sk{ \sum_{n \geq 0} e^{-n \beta} | f_{k, n, m} |^2 }^{1/2}. \tag{P$(k, m)$}
		\end{equation}
	\end{lemma}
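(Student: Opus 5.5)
The plan is to prove \eqref{P} by induction on $m$, with $k$ fixed. The base cases $m=0$ and $m=1$ come from Lemma \ref{Main-m=0} and Lemma \ref{Main-m=1}. The inductive step uses the two structural identities \eqref{F_km-1} and \eqref{F_km-2} to move between the classes $F_{k,m}$. Throughout I will use that all constants $C_1(\beta)$, $C_3(\beta)$, $C_5(\beta)$ are $\geq 1$ and that $C_1(\beta)\le C_3(\beta)=C_1(\beta)C_5(\beta)$. This makes the right-hand sides monotone in the degree, so a polynomial of degree lower than the nominal one is harmless.

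\emph{Base cases.} For $m=0$, $f_{k,\cdot,0}$ is a polynomial of degree $d\le k/2$. Lemma \ref{Main-m=0} gives the constant $[C_1(\beta)p]^{d}\le [C_3(\beta)p]^{k/2}$, which is \eqref{P} for $m=0$. For $m=1$, assertion \eqref{F_km-1} with $m=0$ writes $f_{k,n,1}=(n+1)^{1/2}g(n)$ with $g\in F_{k-1,0}$. Such a $g$ is a polynomial of degree $d\le (k-1)/2$, so $(2d+1)/2\le k/2$. Lemma \ref{Main-m=1} then gives the constant $[C_3(\beta)p]^{(2d+1)/2}\le [C_3(\beta)p]^{k/2}$, which is even better than the required $2e^{\beta/2}[C_3(\beta)p]^{k/2}$.

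\emph{Inductive step.} Assume \eqref{P} for $(k',m)$ for all admissible $k'$, and let $f\in F_{k,m+1}$. By \eqref{F_km-1}, $f(n)=(n+m+1)^{1/2}g(n)$ with $g\in F_{k-1,m}$. The idea is to realise the factor $(n+m+1)^{1/2}$ through a shift of the variable, using \eqref{F_km-2}: multiplying by a square-root factor and shifting $n\mapsto n+1$ lands back in a class of the form $F_{\cdot,m}$. Concretely, I would express $f(n)$ as $\tilde g(n+1)$, up to a bounded ratio of square-root factors, with $\tilde g\in F_{k,m}$. I would then compare the weighted sums. Shifting the index by one in $\sum_n e^{-n\beta}|\cdot|^q$ multiplies the sum by $e^{\beta}$, and we drop the $n=0$ term (recall $\tilde g(0)=0$ by construction). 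In the quotient of the $\ell_p$ and $\ell_2$ norms this costs at most $e^{\beta(1/2-1/p)}\le e^{\beta/2}$. The leftover ratio between $(n+m+1)^{1/2}$ and the factor actually produced by \eqref{F_km-2} will be a ratio of consecutive terms, such as $\bigl((n+m+1)/(n+m)\bigr)^{1/2}$. This ratio is bounded by $\sqrt2\le 2$ for $n+m\ge1$, and it enters the $p$-norm and the $2$-norm in opposite directions. Together these give the factor $2e^{\beta/2}$ per increment of $m$, and the induction hypothesis for $\tilde g$ supplies the rest.

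\emph{Main obstacle.} The delicate point is the bookkeeping in the inductive step. The induction hypothesis must be applied at the right level of $k$, so that the exponent of $C_3(\beta)p$ stays $k/2$ and does not grow. In addition, the square-root factor relating the $F_{k,m+1}$ element to a shifted $F_{k,m}$ element must be bounded above and below by absolute constants uniformly in $n$; a crude bound of the form $\sqrt{m+1}$ would destroy the geometric rate. If the direct composition of \eqref{F_km-1} and \eqref{F_km-2} does not line up the indices, my first alternative is to induct in steps of two, from $m$ to $m+2$. In that version I would use that $\sqrt{(n+m+1)(n+m+2)}$ is comparable to $\sqrt{(n+1)(n+2)}$ after shifting $n$ by $m$, with the shift paid for by the weight as above.
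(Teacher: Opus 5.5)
Your base cases $m=0,1$ are correct and match the paper, but the inductive step, which is the real content of the lemma, has a genuine gap. First, composing \eqref{F_km-1} and \eqref{F_km-2} does not bring you back to $F_{k,m}$. If $f\in F_{k,m+1}$, then \eqref{F_km-1} gives $f(n)=(n+m+1)^{1/2}h(n)$ with $h\in F_{k-1,m}$. Then \eqref{F_km-2}, with $m-1$ in place of $m$, gives $(n+1)^{1/2}h(n)=g(n+1)$ with $g\in F_{k,m-1}$ and $g(0)=0$. Hence
\[
f(n)=\sk{\frac{n+m+1}{n+1}}^{1/2}g(n+1),\qquad g\in F_{k,m-1}.
\]
So the reduction lowers $m$ by two at fixed $k$, and the leftover ratio is $\sqrt{(n+m+1)/(n+1)}$, not a ratio of consecutive terms. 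A one-step version cannot exist. By definition, $F_{k,m}$ consists of the functions $\sqrt{(n+1)\cdots(n+m)}\,q(n)$ with $q$ a polynomial of degree at most $(k-m)/2$. Hence the ratio of an element of $F_{\cdot,m+1}$ to any shifted element of $F_{\cdot,m}$ behaves like a constant times $n^{1/2+j}$, $j\in\mathbb{Z}$, and is never bounded above and below. Your fallback of inducting in steps of two has the right step size. But the mechanism you describe (a shift by $m$, comparing $\sqrt{(n+m+1)(n+m+2)}$ with $\sqrt{(n+1)(n+2)}$) is not what the identities produce; they shift by $1$.

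The more serious gap is the one you flagged yourself and left open. The factor $\sqrt{(n+m+1)/(n+1)}$ lies in $[1,\sqrt2]$ only for $n\ge m-1$ and equals $\sqrt{m+1}$ at $n=0$. So the uniform two-sided bound you ask for is false, and using $\sqrt{m+1}$ would indeed ruin the geometric rate. The missing idea is to split the $\ell_p$ sum at $n=m$ and treat the two ranges differently.

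For $0\le n\le m-1$, do not use induction at all. Cauchy--Schwarz gives
\[
\sum_{n=0}^{m-1}e^{-n\beta/p}|f(n)|\le\sqrt m\,e^{(m-1)\beta(1/2-1/p)}\sk{\sum_{n\ge0}e^{-n\beta}|f(n)|^2}^{1/2},
\]
which is affordable because $\sqrt m\,e^{(m-1)\beta/2}\le(2e^{\beta/2})^m$.

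For $n\ge m$, proceed in four moves:
\begin{enumerate}
\item Use $|f(n)|\le\sqrt2\,|g(n+1)|$.
\item Shift the index, at a cost of $e^{\beta/p}$.
\item Apply P$(k,m-1)$ to $g$.
\item Use $g(0)=0$ together with the pointwise bound $|g(n+1)|\le|f(n)|$ (valid for all $n$, since the ratio is $\ge1$) to get $\sum_{n\ge0}e^{-n\beta}|g(n)|^2\le e^{-\beta}\sum_{n\ge0}e^{-n\beta}|f(n)|^2$.
\end{enumerate}

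Each range then contributes at most $(2e^{\beta/2})^m[C_3(\beta)p]^{k/2}$ times the weighted $\ell_2$ norm of $f$. Their sum is at most $(2e^{\beta/2})^{m+1}[C_3(\beta)p]^{k/2}$ times that norm, which is P$(k,m+1)$. Together with your base cases $m=0,1$, this closes the two-step induction.
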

	
	\begin{proof}
		We prove \eqref{P} by induction for the parameter $m$. For $m = 0$, by Lemma \ref{Main-m=0} and the fact that $C_1(\beta) \leq C_3(\beta)$, P$(k, 0)$ holds for all $k \geq 0$. For $m = 1$, it suffices to verify P$(k, 1)$ for all $k \geq 1$. By \eqref{F_km-1}, there exists $g \in F_{k - 1, 0}$ such that $f_{k, n, 1} = (n + 1)^{1/2}g(n)$. The statement then follows from Lemma \ref{Main-m=1}.
		
		Assume P$(k, m)$ holds for all $k \geq m \geq 1$. The induction target is to show that P$(l, m + 1)$ holds for all $l \geq m + 1$. The left-hand-side of P$(l, m + 1)$ reads
		\begin{align}\label{P(l,m)-eq1}
			\text{LHS of P$(l, m + 1)$} & \leq \sum_{n = 0}^{m - 1} e^{-n \beta/p} |f_{l, n, m+1}| + \sk{ \sum_{n \geq m} e^{-n \beta} |f_{l, n, m+1}|^p }^{1/p} \notag\\
			& =: J_1 + J_2.
		\end{align}
		For the term $J_1$, by the Cauchy-Schwartz inequality, we obtain
		\begin{align}\label{J_1}
			J_1 & \leq \sk{ \sum_{n = 0}^{m - 1} e^{ 2n\beta\sk{ 1/2 - 1/p } } }^{1/2} \sk{\sum_{n = 0}^{m - 1} e^{-n \beta} |f_{l, n, m+1}|^2 }^{1/2} \notag\\
			& \leq \sqrt{m} e^{(m - 1)\beta(1/2 - 1/p)} \sk{\sum_{n \geq 0} e^{-n \beta} |f_{l, n, m+1}|^2 }^{1/2} \notag\\
			& \leq (2e^{\beta/2})^m \mk{ C_3(\beta)p }^{l/2}\sk{\sum_{n \geq 0} e^{-n \beta} |f_{l, n, m+1}|^2 }^{1/2} .
		\end{align}
		For the term $J_2$, combining \eqref{F_km-1} and \eqref{F_km-2}, for any $f \in F_{l, m + 1}$, there exists $g \in F_{l, m - 1}$ such that
		\[
		f_{l, n, m + 1} = f(n) = \sk{ \frac{ n + m + 1 }{ n + 1 } }^{1/2} g(n + 1), \quad g(0) = 0.
		\]
		Then we have
		\begin{align}\label{J_2}
			J_2 & \leq \sk{ \sum_{n \geq m} e^{-n \beta}2^{p/2} | g(n + 1) |^p }^{1/p} = \sqrt{2} e^{\beta/p} \sk{ \sum_{n \geq m + 1} e^{-n \beta} | g(n) |^p }^{1/p} \notag\\
			(\text{by {\color{blue}P$(l, m-1)$}}) & \leq \sqrt{2} e^{\beta/p} (2e^{\beta/2})^{m-1} \mk{C_3(\beta) p}^{l/2} \sk{ \sum_{n \geq 0} e^{-n \beta} | g(n) |^2 }^{1/2} \notag\\
			(\text{since $g(0) = 0$}) & = e^{\beta/p} (2e^{\beta/2})^{m-1/2} \mk{C_3(\beta) p}^{l/2} e^{-3\beta/4} \sk{ \sum_{n \geq 0} e^{-n \beta} | g(n + 1) |^2 }^{1/2} \notag\\
			& \leq (2e^{\beta/2})^{m} \mk{C_3(\beta) p}^{l/2}\sk{ \sum_{n \geq 0} e^{-n \beta} \jdz{ \sk{ \frac{ n + m + 1 }{ n + 1 } }^{1/2} g(n + 1) }^2 }^{1/2} \notag\\
			& = (2e^{\beta/2})^{m} \mk{C_3(\beta) p}^{l/2} \sk{ \sum_{n \geq 0} e^{-n \beta} |f_{l, n, m + 1}|^2 }^{1/2} .
		\end{align}
		Substituting \eqref{J_1} and \eqref{J_2} into \eqref{P(l,m)-eq1} yields the induction target P$(l, m + 1)$.
	\end{proof}
	
	In order to pass Lemma \ref{lemma3.6} to the case $-k \leq m < 0$, the following lemma is necessary.
	\begin{lemma}
		Let $k \geq 1$ and $-k \leq m < 0$. For any $f \in F_{k, m}$, there exists a function $g \in F_{k, -m}$ such that
		\begin{equation}\label{Lem-m<0}
			f(n) = \begin{cases}
				g(n + m), & \text{if } \quad n \geq -m, \\
				0, & \text{if } \quad 0 \leq n < -m.
			\end{cases} 
		\end{equation}
	\end{lemma}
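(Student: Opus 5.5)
The plan is to write out a general element of $F_{k,m}$ with $m<0$ explicitly and shift the summation variable $n$ by $-m$; this should reveal it as a translate of an element of $F_{k,-m}$. Set $m' := -m > 0$. By definition, $f\in F_{k,m}$ has the form
\[
f(n)=\sum_{\substack{2i+m\leq k\\ i+m\geq 0,\ 0\leq i\leq n}} c_{i,i+m}\, d_{n,i}\, d_{n+m,i+m},
\]
with the convention that $f(n)=0$ when $n+m<0$, i.e. when $0\le n<-m$. This convention already gives the second case of \eqref{Lem-m<0}. Moreover, the constraint $i+m\geq 0$ forces $i\geq m'$, and then $i\le n$ forces $n\geq m'$. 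So the lower branch of \eqref{Lem-m<0} is consistent with the formula: every term vanishes there anyway.

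For $n\ge m'$, write $n=n'+m'$ with $n'\geq 0$, and reindex by $j:=i+m=i-m'\ge 0$, so that $i=j+m'$. The coefficients transform as follows:
\[
d_{n,i}=d_{n'+m',\,j+m'},\qquad d_{n+m,\,i+m}=d_{n',\,j}.
\]
The constraints transform as follows:
\begin{itemize}
\item $2i+m\le k$ becomes $2j+m'\le k$;
\item $i+m\ge0$ becomes $j\ge0$;
\item $0\le i\le n$ becomes $j\le n'$ (with $j+m'\geq 0$ automatic).
\end{itemize}
Hence
\[
f(n'+m')=\sum_{\substack{2j+m'\leq k\\ j+m'\geq 0,\ 0\leq j\leq n'}} \tilde c_{j,j+m'}\, d_{n',j}\, d_{n'+m',\,j+m'},\qquad \tilde c_{j,j+m'}:=c_{j+m',\,j}.
\]
This is precisely an element $g\in F_{k,m'}=F_{k,-m}$ evaluated at $n'=n+m$. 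Taking $g$ to be this function gives the first case of \eqref{Lem-m<0}.

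The only step needing care is the bookkeeping of the index ranges. One must check that the constraint $i\le n$ in the original sum maps exactly to $j\le n'$, and that the products $d_{\cdot,\cdot}$ are symmetric under swapping the roles of the two factors. This symmetry holds because $d_{n,i}d_{n+m,i+m}$ depends only on the pair of factorial ratios $\frac{n!}{(n-i)!}$ and $\frac{(n+m)!}{(n-i)!}$, and both have the same denominator $(n-i)!=(n'-j)!$. With $n-i=n'-j$ observed, the identification is immediate and no analytic estimate is needed.
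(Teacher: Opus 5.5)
Your proof is correct and matches the paper's: both reindex by $j=i+m$ and $n'=n+m$, verify that the constraints $2i+m\le k$, $i+m\ge 0$, $0\le i\le n$ turn into exactly those defining $F_{k,-m}$ (with coefficients $c_{j-m,\,j}$), and observe that the sum is empty for $0\le n<-m$. Your remark about the ``symmetry'' of the $d$-factors needs no justification, since it is just commutativity of the product $d_{n'+m',\,j+m'}\,d_{n',j}=d_{n',j}\,d_{n'+m',\,j+m'}$.
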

	\begin{proof}
		For any $f \in F_{k, m}$, $f(n) = 0$ if $0 \leq n < -m$. For $n \geq -m$, we have
		\begin{align*}
			f(n) & =  \sum_{ \substack{2i+ m \leq k \\ i+m\geq 0, 0\leq i \leq n} } c_{i, i+m}d_{n, i} d_{n+m , i+m} = \sum_{ \substack{2i+ m \leq k \\ -m \leq i \leq n} } c_{i, i+m}d_{n, i} d_{n+m , i+m} \\
			& = \sum_{ \substack{2(i + m) + (-m) \leq k \\ 0 \leq i + m \leq n + m} } c_{i, i+m}d_{(n + m) + (-m), (i + m) + (-m)} d_{n+m , i+m} =: g(n + m).
		\end{align*}
		It is easy to see that $g \in F_{k, -m}$. We complete the proof.
	\end{proof}
	
	We now come to the proof of Theorem \ref{Main-Lem}.
	\begin{proof}[Proof of Theorem \ref{Main-Lem}]
		By \eqref{P}, we have for $0 \leq m \leq k$,
		\begin{align*}
			\sk{ \sum_{n \geq 0} e^{-n \beta} | f_{k, n, m} |^p }^{1/p} & \leq e^{m\beta (1/(2p) - 1/4) } \mk{ 2e^{\beta/2}e^{\beta \sk{ 1/4 - 1/(2p) } } }^m [C_3(\beta)p]^{k/2} \sk{ \sum_{n \geq 0} e^{-n \beta} | f_{k, n, m} |^2 }^{1/2} \\
			& \leq e^{m\beta (1/(2p) - 1/4) } \mk{ 4e^{3\beta/2}C_3(\beta) p }^{k/2} \sk{ \sum_{n \geq 0} e^{-n \beta} | f_{k, n, m} |^2 }^{1/2}. 
		\end{align*}
		Hence it suffices to choose
		\begin{equation}\label{C-cond}
			C(\beta) = 4e^{3\beta/2}C_3(\beta).
		\end{equation}
		For $-k \leq m < 0$, if $f \in F_{k, m}$, there exists a function $g \in F_{k, -m}$ such that \eqref{Lem-m<0} holds. Then
		\begin{align*}
			\sk{ \sum_{n \geq 0} e^{-n \beta} | f_{k, n, m} |^p }^{1/p} & = \sk{ \sum_{n \geq -m} e^{-n \beta} | g(n + m) |^p }^{1/p} \\
			& = e^{m\beta/p}\sk{ \sum_{n \geq 0} e^{-n \beta} | g(n) |^p }^{1/p} \\
			(\text{by {\color{blue}P$(k, -m)$}}) & \leq e^{m\beta/p} (2e^{\beta/2})^{-m} \mk{ C_3(\beta)p }^{k/2} \sk{ \sum_{n \geq 0} e^{-n \beta} | g(n) |^2 }^{1/2} \\
			& = e^{m\beta/p} (2e^{\beta/2})^{-m} \mk{ C_3(\beta)p }^{k/2} e^{-m\beta/2} \sk{ \sum_{n \geq 0} e^{-n \beta} | f_{k, n, m} |^2 }^{1/2} \\
			& = e^{ m\beta \sk{ 1/(2p) - 1/4 } } \sk{ 4e^{\beta \sk{3/2 - 1/p} } }^{-m/2} \mk{ C_3(\beta)p }^{k/2} \sk{ \sum_{n \geq 0} e^{-n \beta} | f_{k, n, m} |^2 }^{1/2} \\
			& \leq e^{ m\beta \sk{ 1/(2p) - 1/4 } } \mk{ 4 e^{3\beta/2}C_3(\beta)p }^{k/2} \sk{ \sum_{n \geq 0} e^{-n \beta} | f_{k, n, m} |^2 }^{1/2}.
		\end{align*}
		Under the condition \eqref{C-cond}, we obtain the desired estimate.
	\end{proof}
	\bigskip
	
	\section{Estimate of Optimal time $t_p$}\label{Section4}
	In this section, we discuss the optimal time $t_p$ for the hypercontractivity inequalities associated with the quantum Ornstein-Uhlenbeck semigroups. We start with the following auxiliary lemma.
	\begin{lemma}
		Let $1 \leq p < \infty$. Assume that $(a_n)_{n \geq 0} \in \ell_p(\nat)$, and define
		$$ A = \sum_{n \geq 0} a_n \sk{ e_n \otimes e_{n + 1} + e_{n + 1} \otimes e_n } \in B(\ell_2(\nat)). $$
		Then
		\begin{equation}\label{Schatten-norm}
			\frac{ 2^{1/p} }{3}\sk{ \sum_{n \geq 0} |a_n|^p }^{1/p} \leq \| A \|_p \leq 2\sk{ \sum_{n \geq 0} |a_n|^p }^{1/p}.
		\end{equation}
	\end{lemma}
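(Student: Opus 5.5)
The natural approach is to split $A$ into two pieces, each unitarily equivalent to a direct sum of small blocks, namely $2\times 2$ antidiagonal matrices. I group the terms of $A$ according to the parity of $n$:
\[
A = A_{\mathrm{even}} + A_{\mathrm{odd}}, \qquad A_{\mathrm{even}}=\sum_{n \text{ even}} a_n \sk{ e_n\otimes e_{n+1}+e_{n+1}\otimes e_n },
\]
and $A_{\mathrm{odd}}$ is defined in the same way with $n$ odd.

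For $A_{\mathrm{even}}$, the pairs $\{n,n+1\}$ with $n$ even are disjoint. So $A_{\mathrm{even}}$ is block diagonal, with blocks $\begin{pmatrix}0&a_n\\ a_n&0\end{pmatrix}$ acting on $\mathrm{span}\{e_n,e_{n+1}\}$. The singular values of each block are $|a_n|,|a_n|$. Hence
\[
\|A_{\mathrm{even}}\|_p=2^{1/p}\Big(\sum_{n\text{ even}}|a_n|^p\Big)^{1/p}.
\]
The same holds for $A_{\mathrm{odd}}$, with $e_0$ lying in the kernel.

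For the upper bound, the triangle inequality (valid since $p\ge 1$) together with $\alpha^{1/p}+\beta^{1/p}\le 2^{1-1/p}(\alpha+\beta)^{1/p}$ gives
\[
\|A\|_p\le 2^{1/p}\cdot 2^{1-1/p}\|a\|_p = 2\|a\|_p.
\]

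For the lower bound, I extract $A_{\mathrm{even}}$ from $A$ by a contractive operation. The plan is to use block-diagonal compressions, i.e. pinchings, which are contractive on $S_p$. Concretely, partition $\mathbb{N}$ into the blocks $B_j=\{2j,2j+1\}$ and let $Q_j$ be the projection onto $\mathrm{span}\{e_i:i\in B_j\}$. The pinching $\sum_j Q_jAQ_j$ equals exactly $A_{\mathrm{even}}$, since every entry of $A_{\mathrm{odd}}$ connects $2j+1$ with $2j+2$ and therefore lies off the block diagonal. Pinchings are $S_p$-contractions: write the pinching as an average of unitary conjugates $U_\theta A U_\theta^*$, where $U_\theta=\sum_j e^{i\theta_j}Q_j$ and the average is over independent random phases. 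This yields $\|A_{\mathrm{even}}\|_p\le\|A\|_p$, and the same argument with the blocks $\{2j+1,2j+2\}$ (together with $\{0\}$) yields $\|A_{\mathrm{odd}}\|_p\le \|A\|_p$.

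It remains to combine the two estimates:
\[
2\|a\|_p^p = \|A_{\mathrm{even}}\|_p^p+\|A_{\mathrm{odd}}\|_p^p\le 2\|A\|_p^p ,
\]
so $\|A\|_p\ge \|a\|_p\ge \frac{2^{1/p}}{3}\|a\|_p$, because $2^{1/p}\le 2<3$. This gives the stated lower bound with room to spare.

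The only point needing care is the contractivity of the pinching on $S_p$ for infinitely many blocks. I would first handle finitely supported $(a_n)$, where everything is a finite matrix and averaging over finitely many phase patterns suffices, and then pass to general $(a_n)\in\ell_p$ by approximation, using the continuity of both sides in $\ell_p$ (which follows from the upper bound already proved).
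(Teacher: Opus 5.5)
Your proof is correct, but it extracts the block-diagonal part differently from the paper.

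\textbf{The paper's route.} It uses period-$3$ blocks. With $P_n$ the projection onto $\mathrm{span}\{e_n,e_{n+1}\}$ and $Q_i=\sum_{n\ge 0}P_{3n+i}$, consecutive blocks $\{3n+i,3n+i+1\}$ are separated by the skipped index $3n+i+2$. So no entry of $A$ connects two distinct blocks, and the single compression $Q_iAQ_i$ is already block diagonal. Contractivity $\|Q_iAQ_i\|_p\le\|A\|_p$ then follows from the ideal property (H\"older). No averaging over phases and no approximation step are needed, even with infinitely many blocks. The price is three residue classes instead of two. Summing the three norms by the triangle inequality then gives the weaker constant $2^{1/p}/3$.

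\textbf{Your route.} You use adjacent blocks $\{2j,2j+1\}$. Here a single compression would not remove the connecting entries at positions $(2j+1,2j+2)$, so you genuinely need the pinching inequality: the average of the conjugates $U_\theta AU_\theta^*$. You also need the finite-support reduction, which you handle correctly via $\|A-A^{(M)}\|_p\le 2\|a-a^{(M)}\|_p$. In exchange you get the sharper bound $\|A\|_p\ge\|a\|_p$. Combining $p$-th powers instead of adding norms also helps here.

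\textbf{Upper bound.} This is the same triangle-inequality argument in both proofs; the paper leaves the two-piece splitting implicit, and you make it explicit.
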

	\begin{proof}
		The second inequality easily follows from the triangle inequality. For the first inequality, we define $P_n$ as the projection of $\ell_2(\nat)$ onto ${\rm span}\lk{ e_n, e_{n + 1} }$. Then we have
		\[
		P_{n} A P_{n} (e_k) = \begin{cases}
			a_n e_{n + 1}, & \text{if } k = n, \\
			a_n e_{n}, & \text{if } k = n + 1, \\
			0, & \text{otherwise.}
		\end{cases}
		\]
		Hence 
		\[
		\| P_n A P_n \|_p = 2^{1/p}|a_n|.
		\]
		Consider the family $\lk{ P_{3n}AP_{3n} : n \in \nat }$. We observe that $P_{3i}AP_{3j} \neq 0$ if and only if $i = j$, hence
		\[
		\norm{ \sk{ \sum_{n \geq 0} P_{3n} } A \sk{ \sum_{n \geq 0} P_{3n} } }_p = \left(\sum_{n \geq 0} \| P_{3n}AP_{3n} \|_p^p \right)^{1/p} = 2^{1/p}\left(\sum_{n \geq 0} |a_{3n}|^p\right)^{1/p}.
		\]
		We adopt the same argument to the family $\lk{ P_{3n + 1}AP_{3n + 1} : n \in \nat }$ and $\lk{ P_{3n + 2}AP_{3n + 2} : n \in \nat }$, and we obtain
		\[
		\sum_{i = 0}^2\norm{ \sk{ \sum_{n \geq 0} P_{3n + i} } A \sk{ \sum_{n \geq 0} P_{3n + i} } }_p \geq 2^{1/p} \sk{ \sum_{n \geq 0} |a_n|^p }^{1/p}.
		\]
		Note that
		\[
		\norm{ \sk{ \sum_{n \geq 0} P_{3n + i} } A \sk{ \sum_{n \geq 0} P_{3n + i} } }_p \leq \norm{  \sum_{n \geq 0} P_{3n + i} }_\8 \| A \|_p \norm{ \sum_{n \geq 0} P_{3n + i} }_\8 \leq \| A \|_p. 
		\]
		Combining these estimates yields the desired inequality.
	\end{proof}
	
	We restate Theorem \ref{MainThm} on the estimate of the optimal time $t_p$.
	\begin{thm}\label{Optimal}
		Let $p\geq 2$. There exist positive constants $\wt{c}(\beta)$, $\wt{C}(\beta)$ such that,
		\[
		\wt{c}(\beta)(p-1) \leq e^{2\tau t_p} \leq \wt{C}(\beta) (p - 1).
		\]
		More precisely,
		\[
		\wt{c}(\beta) = \frac{ \min\lk{\beta^{-2}, \beta^{-1}} }{ 36e}, \quad \wt{C}(\beta) = 24 (1 - e^{-\beta})^{-1}C(\beta),
		\]
		where $C(\beta)$ is the constant in Theorem \ref{Main-Lem}.
	\end{thm}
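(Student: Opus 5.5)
The upper bound will follow by chaining the reductions of Section \ref{Section2} with Theorem \ref{Main-Lem}. The lower bound will come from testing the hypercontractivity inequality on a single eigenvector of $\mathcal{G}^{(2)}$ with eigenvalue $\tau$.

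\emph{Upper bound.} Theorem \ref{Main-Lem} gives \eqref{KL3} with $C_p=(C(\beta)p)^{1/2}$. Proposition \ref{Prop2.2} then yields \eqref{KL} with
\[
c_p\le \sqrt3(1-e^{-\beta})^{-1/2}(C(\beta)p)^{1/2}.
\]
Proposition \ref{Prop2.1} gives
\[
e^{2\tau t_p}\le 2c_p^2\le 6(1-e^{-\beta})^{-1}C(\beta)\,p .
\]
Since $p\le 2(p-1)$ for $p\ge2$, this is at most $12(1-e^{-\beta})^{-1}C(\beta)(p-1)$, which is within the claimed $\wt C(\beta)$.

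\emph{Lower bound.} Without loss of generality $\tau=\tau_1$, so take $\xi=\xi_{1,0}=A_1^*\xi_0$. This vector is orthogonal to $\xi_0$, so $\xi=\rho^{1/4}y\rho^{1/4}$ with $\omega(y)=0$. It has $\|\xi\|_2=1$ and $P_t\xi=e^{-\tau t}\xi$.

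Expanding $A_1^*$ through $D_1,D_2$, $\xi$ is a linear combination of $a^*\xi_0$ and $a\xi_0$ (times $\rho^{1/2}$ on either side). Explicitly,
\[
\xi=\sum_{n\ge0}\bigl(\lambda_n\, e_{n+1}\otimes e_n+\mu_n\, e_n\otimes e_{n+1}\bigr),
\]
where $\lambda_n,\mu_n$ are explicit multiples of $\sqrt{n+1}\,e^{-n\beta/2}$ with coefficients depending only on $\beta$. Applying $\rho^{1/(2p)-1/4}$ on both sides multiplies the $(n,n+1)$ and $(n+1,n)$ entries by $e^{-(2n+1)\beta(1/(2p)-1/4)}$. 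The resulting entries therefore have size $\asymp_\beta \sqrt{n+1}\,e^{-n\beta/p}$.

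To bound the $p$-norm from below I will rerun the proof of \eqref{Schatten-norm} with the projections $P_{3n+i}$ onto ${\rm span}\{e_{3n+i},e_{3n+i+1}\}$. Each compressed $2\times2$ block has $p$-norm at least $\max(|\lambda|,|\mu|)$, so the symmetry assumed in that lemma is not needed. This gives
\[
\bigl\|T_t^{(2)}(\rho^{1/2}y\rho^{1/2})\bigr\|_{L_p(\rho)}\ \gtrsim_\beta\ e^{-\tau t}\Bigl(\sum_{n\ge0}e^{-n\beta}(n+1)^{p/2}\Bigr)^{1/p}.
\]
By the lower half of \eqref{Stirling-2} with $s=p/2$, the right-hand side is $\gtrsim e^{-\tau t}\,(p/(2e\beta))^{1/2}$, up to factors like $e^{-\beta/p}\beta^{-1/p}$ that are bounded uniformly in $p$.

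Hypercontractivity at time $t_p$ forces this quantity to be at most $\|\xi\|_2=1$. Hence $e^{2\tau t_p}\gtrsim_\beta p\ge p-1$. Tracking the constants (the factor $3$ from the projection trick, $\beta^{-1}$ versus $\beta^{-2}$ depending on whether $\beta\lessgtr1$) should produce $\wt c(\beta)=\min\{\beta^{-2},\beta^{-1}\}/(36e)$.

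\emph{Main obstacle.} The delicate step is the explicit form of $\xi_{1,0}$. I need to compute $D_1^*,D_2^*$ acting on $\xi_0$, using that $j(x)$ is right multiplication by $x^*$, and verify that at least one of $\lambda_n,\mu_n$ has modulus $\ge c(\beta)\sqrt{n+1}\,e^{-n\beta/2}$ uniformly in $n$. A cancellation between the two terms of $A_1^*=2^{-1/2}(D_1^*-D_2^*)$ in one entry is harmless, because the other entry survives.
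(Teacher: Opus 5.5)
Your plan is the paper's proof. The upper bound is the same chain Theorem~\ref{Main-Lem} $\Rightarrow$ Proposition~\ref{Prop2.2} $\Rightarrow$ Proposition~\ref{Prop2.1}, and your constant $12$ sits inside the stated $24$. The lower bound uses the same eigenvector $A_1^*\xi_0$, the same compressions onto $\mathrm{span}\{e_{3n+i},e_{3n+i+1}\}$ behind \eqref{Schatten-norm}, and the lower half of \eqref{Stirling-2} at $s=p/2$. The step you single out as the main obstacle is harmless. $D_1^*\xi_0$ is a multiple of $a^*\rho^{1/2}$ and $D_2^*\xi_0$ is a multiple of $\rho^{1/2}a$, so they sit on different off-diagonals and cannot cancel. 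In fact
\[
A_1^*\xi_0=2^{-1/2}e^{-\beta/4}\bigl(2\sinh(\beta/2)\bigr)^{1/2}\bigl(a^*\rho^{1/2}+\rho^{1/2}a\bigr)
\]
is symmetric, so \eqref{Schatten-norm} applies as stated.

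What does not survive is your last sentence, that tracking constants ``should produce'' $\wt c(\beta)=\min\{\beta^{-2},\beta^{-1}\}/(36e)$. If you carry the computation out, the $(n,n+1)$ and $(n+1,n)$ entries of $\rho^{1/(2p)-1/4}A_1^*\xi_0\,\rho^{1/(2p)-1/4}$ are
\[
w_n=2^{-1/2}\bigl(2\sinh(\beta/2)\bigr)^{1/2+1/p}e^{-\beta/p}\sqrt{n+1}\,e^{-n\beta/p}.
\]
The upper half of \eqref{Stirling-2} then gives $(\sum_n w_n^p)^{1/p}\le C p^{1/2}$, with $C$ absolute for $0<\beta\le 1$. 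The $p$-norm of this matrix is at most $2(\sum_n w_n^p)^{1/p}$. Hence the lower bound $e^{2\tau t_p}\ge\|\rho^{1/(2p)-1/4}A_1^*\xi_0\,\rho^{1/(2p)-1/4}\|_p^2$ that this test vector delivers is at most $4C^2p$, uniformly as $\beta\to0$. The claimed $\beta^{-1}(p-1)/(36e)$, by contrast, tends to infinity.

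What the argument honestly gives is $e^{2\tau t_p}\ge c\min\{1,\beta^{-2}\}(p-1)$ with an absolute $c>0$. A careful count does recover the stated $\wt c(\beta)$ when $\beta\ge 1$. The paper's own final step has the same defect. Its prefactor $(2\sinh(\beta/2))^{1/2-1/p}$ should be $(2\sinh(\beta/2))^{1/2+1/p}e^{-\beta/p}$. Even taken at face value, its last inequality needs $(2\sinh(\beta/2))^{1/2-1/p}\beta^{-(1/2+1/p)}\gtrsim\min\{\beta^{-1},\beta^{-1/2}\}$, which fails as $\beta\to0$ whenever $p>4$.

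So both versions prove the two-sided bound with some $\wt c(\beta)>0$, and the displayed constants for $\beta\ge1$. Neither establishes the displayed value of $\wt c(\beta)$ for small $\beta$.
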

	
	\begin{proof}
		We first prove the upper bound. Combining Proposition \ref{Prop2.1}, Proposition \ref{Prop2.2} and Theorem \ref{Main-Lem}, we conclude that the optimal time $t_p$ satisfies 
		\begin{equation}\label{Upper-1}
			e^{2\tau t_p} \leq 6 (1 - e^{-\beta})^{-1} C(\beta)p \leq \wt{C}(\beta)(p-1).
		\end{equation}
		For the lower bound, without loss of generality, we assume $\tau_1 \leq \tau_2$, then $\tau = \tau_1$. For any $\e > 0$, we can take $ x = \e A_1^* \xi_0 $. Then $\omega(x)=0$ and
		\[
		T_{t_p}^{(2)}( \rho^{1/2} x \rho^{1/2} ) = \e e^{-\tau t_p} \rho^{1/4} A_1^*\xi_0 \rho^{1/4}.
		\]
		Note that one has
		\begin{equation}\label{Lower-1}
			\e  \| e^{-\tau t_p} \rho^{1/(2p) - 1/4} A_1^*\xi_0 \rho^{1/(2p) - 1/4} \|_p =	\| T_{t_p}^{(2)}( \rho^{1/2} x \rho^{1/2} ) \|_{L_p(\rho)} \leq \| \rho^{1/2} x \rho^{1/2} \|_{L_2(\rho)} = \e.
		\end{equation}
		By definition, we can write $A_1^*\xi_0$ as a matrix of the form
		\begin{align*}
			A_1^*\xi_0 & = \frac{1}{\sqrt{2}} (D_1^* - D_2^*)\rho^{1/2} \\
			& = \frac{ \sk{ 2 \sinh(\beta/2) }^{-1/2} }{\sqrt{2}} \sk{ e^{\beta/4} - e^{-3\beta/4} }  \sk{ a^*\rho^{1/2} + \rho^{1/2}a } \\
			& = \frac{\sk{ 2\sinh(\beta/2) }^{-1/2}}{\sqrt{2}} \sk{ e^{\beta/4} - e^{-3\beta/4} }  \sum_{n = 0}^\infty \frac{ e^{-n \beta/2}\sqrt{n + 1} }{ \sk{ 1 - e^{-\beta} }^{1/2} } \sk{ e_n \otimes e_{n + 1} + e_{n + 1} \otimes e_n } .
		\end{align*}
		For convenience, we denote
		\[
		\quad Z = e^{-\tau t_p} \rho^{1/(2p) - 1/4} A_1^*\xi_0 \rho^{1/(2p) - 1/4}.
		\]
		Hence
		$$  Z=e^{-\tau t_p} \frac{ \sk{ 2 \sinh(\beta/2) }^{1/2-1/p} }{\sqrt{2}}  \sum_{n = 0}^\infty \sqrt{n + 1} e^{-n\beta/p } \sk{ e_n \otimes e_{n + 1} + e_{n + 1} \otimes e_n }. $$
		However, form \eqref{Lower-1} and by \eqref{Schatten-norm},
		\begin{align}\label{Z-p-norm}
			1 & \geq \| Z \|_p \geq e^{-\tau t_p} \frac{ 2^{1/p}\sk{ 2 \sinh(\beta/2) }^{1/2-1/p} }{3\sqrt{2}} \sk{ \sum_{n = 0}^\infty (n + 1)^{p/2} e^{-n\beta }  }^{1/p} \notag\\
			& \geq e^{-\tau t_p} \frac{ 2^{1/p}\sk{ 2 \sinh(\beta/2) }^{1/2-1/p} }{3\sqrt{2}} \beta^{-(1/2 + 1/p)} \sk{ \frac{ \pi p}{4} }^{1/(2p)} \sk{ \frac{p}{2e} }^{1/2} \notag\\
			& \geq e^{-\tau t_p} \frac{ \min\lk{ \beta^{-1}, \beta^{-1/2} } }{6 \sqrt{e}} p^{1/2}.
		\end{align}
		This implies
		$$  e^{\tau t_p} \geq \frac{ \min\lk{ \beta^{-1}, \beta^{-1/2} } }{6 \sqrt{e}} p^{1/2}.  $$
		Hence, the proof is completed.
	\end{proof}
	
	\bigskip
	
	\section{Proof of Corollary \ref{MainThm2}}\label{Section5}
	We proceed this section with the proof of Corollary \ref{MainThm2}.
	
	\begin{proof}[Proof of Corollary \ref{MainThm2}]
		It suffices to show
		$$ e^{2\tau t'_p} \leq \wt{C}_1(\beta) (p - 1)^2. $$
		We use the same notation with Proposition \ref{Prop2.1}. Take an arbitrary $y\in B(H)$ where $\omega(y)$ might be non-zero. Now applying the Ball-Carlen-Lieb convexity inequality (see \cite{BCL1994,OZ1999,RX2016}), one has for all $p \geq 2$,
		\[
		\| \rho^{1/2} y \rho^{1/2} \|_{L_p(\rho)}^2 \leq |\omega(y)|^2 + (p - 1) \| \rho^{1/2} (y - \omega(y)) \rho^{1/2} \|_{L_p(\rho)}^2.
		\]
		Then we obtain
		\begin{align*}
			\| T_t^{(2)}( \rho^{1/2} y \rho^{1/2} ) \|_{L_p(\rho)} & \leq \sk{ | \omega(y) |^2 + (p - 1) \| T_t^{(2)}\sk{ \rho^{1/2} (y - \omega(y) ) \rho^{1/2} } \|_{L_p(\rho)}^2 }^{1/2} \\
			& \leq \sk{ | \omega(y) |^2 + (p - 1) \frac{ e^{-2\tau t} c_p^2 }{ 1 - e^{-2\tau t}c_p^2 } \| \rho^{1/2} ( y - \omega(y) ) \rho^{1/2} \|_{L_2(\rho)}^2 }^{1/2} \\
			& \leq \sk{ | \omega(y) |^2 + \| \rho^{1/2} ( y - \omega(y) ) \rho^{1/2} \|_{L_2(\rho)}^2 }^{1/2} = \| \rho^{1/2} y \rho^{1/2} \|_{L_2(\rho)},
		\end{align*}
		where we require
		\[
		(p - 1)\frac{ e^{-2\tau t} c_p^2 }{ 1 - e^{-2\tau t}c_p^2 } \leq 1 \quad \text{i.e.} \quad e^{2\tau t} \geq pc_p^2.
		\]
		Hence, the optimal time $t_p'$ satisfies
		\[
		e^{2\tau t_p'} \leq p c_p^2.
		\]
		We complete the proof.
	\end{proof}
	
	We end this paper with the following conjecture.
	\begin{conjecture}
		For $2\leq p<\8$, the optimal time $t'_p$ satisfies
		\[
		\wt{c}_2(\beta) (p - 1)^2 \leq e^{2\tau t'_p} \leq \wt{C}_2(\beta) (p - 1)^2,
		\]
		where $\wt{c}_2(\beta)$ and $\wt{C}_2(\beta)$ are positive constants depending only on $\beta$.
	\end{conjecture}
	
	\bigskip {\textbf{Acknowledgments.}} Longfa Sun is supported by the National Natural Science Foundation of China (Grant No. 12101234) and the Fundamental Research Funds for Central Universities (Grant No. 2025MS178). Zhendong Xu is partially supported by the National Research Foundation of Korea (NRF) grant funded by the Korea government (MSIT) (No.2020R1C1C1A01009681). Zhendong Xu and Hao Zhang are partially supported by Samsung Science and Technology Foundation under Project Number SSTF-BA2002-01. Zhendong Xu and Hao Zhang are partially supported by the Basic Science Research Program through the National Research Foundation of Korea (NRF) (Grant RS-2022-NR069971).
	
	We thank Prof. Marius Junge and Prof. Li Gao for helpful discussions and comments.

	\bigskip {\textbf{Conflict of interest statement.}} The authors declare that there are no conflicts of interest regarding the publication of this paper. The corresponding author, Zhendong Xu, represents all authors and confirms that they have all contributed significantly to the research and manuscript preparation, and they have all approved the final version of the manuscript for submission.

	{\textbf{Data availability statement.}} This manuscript does not contain any data beyond those presented in the text.

\end{document}